\documentclass[12pt]{article}
\usepackage{graphicx}
\usepackage{amssymb}
\usepackage{amsmath}
\usepackage{amsthm}

\usepackage{color}
\numberwithin{equation}{section}
\textwidth = 6.5 in
\textheight = 7.0 in
\oddsidemargin = 0.0 in
\evensidemargin = 0.0 in
\topmargin =1.0 in
\headheight = 0.0 in
\headsep = 0.0 in
\parskip = 0.2in
\parindent = 0.2in

\newtheorem{teo}{Theorem}[section]
\newtheorem{lem}{Lemma}[section]
\theoremstyle{definition}
\newtheorem{defi}{Definition}[section]
\newtheorem{remark}{Remark}[section]

\newcommand{\Swiech}{\'{S}wi\c{e}ch}
\newcommand{\R}{{\mathbb R}}  
\newcommand{\e}{\varepsilon}

\title{Entire solutions of fully nonlinear 
elliptic
 \\ equations with a 
superlinear gradient term }
\author{G.\,Galise\thanks{University of Salerno, Italy. Email: ggalise@unisa.it} , S. Koike\thanks{University of Tohoku, Japan. Email: koike@math.tohoku.ac.jp} , O. Ley\thanks{IRMAR - Institut de Recherche Math\'ematique de Rennes;\vskip0cm \hskip0.3cm INSA Rennes - Institut National des Sciences Appliqu\'ees - Rennes, France. \vskip0cm \hskip0.3cm Email: olivier.ley@insa-rennes.fr} \ and A.Vitolo\thanks{University of Salerno, Italy. Email: vitolo@unisa.it}}
\date{}  
\begin{document}
\maketitle

\begin{abstract}
In this paper we consider second order fully nonlinear operators with an additive superlinear gradient term. Like in the pioneering paper of Brezis for the semilinear case, we obtain the existence of entire viscosity solutions, defined in all the space, without assuming global bounds. A uniqueness result is also obtained for special gradient terms, subject to a convexity/concavity type assumption where superlinearity is essential and has to be handled in a different way from the linear case.
\vskip0.2cm
\noindent {\bf Keywords}: Fully nonlinear elliptic equations; Osserman functions; comparison principles; entire solutions; viscosity solutions.
\vskip0.2cm
\noindent {\bf Mathematics Subject Classification 2010}:  35J65; 35B50; 49L25.

\end{abstract}

\section{Introduction}

We are interested in existence and uniqueness of solutions in $\mathbb R^n$ of fully nonlinear second order uniformly elliptic equations having superlinear growth in $u$ and $Du$. Solutions in the whole space are said to be \emph{entire}.\\
In the pioneering work \cite{Br}, Brezis considered the semilinear elliptic problem
\begin{equation}\label{eqBrezis}
\Delta u-|u|^{s-1}u=f(x),\quad s>1,
\end{equation}
showing that it is well-posed in ${\cal D}'(\mathbb R^n)$ without prescribing conditions at infinity for the data $f$ and $u$. The existence of a unique solutions $u\in L^s_{\rm loc}(\mathbb R^n)$ is proved assuming only $f\in L^1_{\rm loc}(\mathbb R^n)$. Moreover $u\geq0$ a.e. if $f\leq0$ a.e. in $\mathbb R^n$.\\
This result was extended by Esteban, Felmer and Quaas \cite{EFQ} for the larger class of fully nonlinear uniformly elliptic problems
\begin{equation}\label{eqEFQ}
F(D^2u)-|u|^{s-1}u=f(x)\quad \text{in $\mathbb R^n$}
\end{equation}
where $f\in L^n_{\rm loc}(\mathbb R^n)$ and the solution $u$ is intended in the $L^n$-viscosity sense.\\
In \cite{GV} Galise and Vitolo generalized the previous results for operators depending also on $x$ and on the gradient. Following the original ideas of Brezis, combined with viscosity type arguments, they proved in particular the existence of entire solutions of the uniformly elliptic equation \begin{equation}\label{eqGV}
F(x,D^2u)+H(Du)-|u|^{s-1}u=f(x),
\end{equation} where $F(x,\cdot)$ is merely a measurable functions, the Hamiltonian $H:\mathbb R^n\mapsto\mathbb R$ depends in a Lipschitz way on the gradient variable, $s$ is any real number strictly larger than 1 and $f\in L^n_{\rm loc}(\mathbb R^n)$.
Concerning the uniqueness it is a remarkable fact that if the principal part $F$ is independent on $x$, the well-posedness of (\ref{eqGV}) is ensured assuming only the continuity of the datum $f$, while in the general case further assumptions are needed in order to control the oscillation in the $x$-variable and the regularity of the solutions.\\
In a recent paper \cite{agq12}  Alarcon, Garcia Melian and Quaas proved various results of existence and uniqueness of distributional solutions of equation (\ref{eqBrezis}) in Sobolev spaces in the case of an additive gradient term with superlinear growth.

\noindent
Here we propose to study the well-posedness in the whole space for 
\begin{equation}\label{eq2}
F(x,D^2u)+ H(x,Du)-|u|^{s-1}u=f(x),
\end{equation}
where $H(\cdot,Du)$ may have a superlinear growth in the first derivative.\\
\noindent
In (\ref{eq2}), we will assume the following. The second order term
\begin{equation}\label{eq3}
\text{$F$ is $(\lambda,\Lambda)$-uniformly elliptic and  $F(x,0)=0$ a.e. in $\mathbb R^n$,}
\end{equation}
see~(\ref{eq5}) for a definition.
As regard $H:\mathbb R^n\times\mathbb R^n\mapsto\mathbb R,$ we will assume
\begin{equation}\label{eq3''}
H(x,0)=0\quad\text{a.e. in $\mathbb R^n$}
\end{equation}
  and there exist $\gamma_1,\gamma_m>0,$ $m> 1$ such that
\begin{equation}\label{eq3'}
\left|H(x,p)-H(x,q)\right|\leq\left(\gamma_1+\gamma_m(|p|^{m-1}+|q|^{m-1})\right)|p-q|
\end{equation}
for $p,\,q\in\mathbb R^n$ and a.e. $x\in\mathbb R^n$.\\
Note that it is always possible to assume~(\ref{eq3''}) and  $F(x,0)=0$ by replacing $f(x)$
with $f(x)-H(x,0)-F(x,0).$
Making these assumptions we have in mind as prototype the  equation
\begin{equation}\label{prototype}
{\cal P}^+_{\lambda,\Lambda}(D^2u)\pm c_1|Du|+ c_m|Du|^m-|u|^{s-1}u=f(x),
\end{equation}
where $c_1,\,c_m\in \mathbb R$ and
${\cal P}^+_{\lambda,\Lambda}$ is the Pucci extremal operator, see next Section for definitions.
Concerning the uniqueness part, we focus our attention on the case $c_m>0$ or $c_m<0$, referring to \cite{GV} 
for the case $ c_m=0$. Due to the assumptions, in particular (\ref{eq3-convexe}),  
it is worth  noticing 
 that the proof of the case $c_m >0$ is different from the case $c_m=0$, and the 
latter case, corresponding to (\ref{eqGV}) with a Lipschitz continuous Hamiltonian, cannot be obtained 
from our treatment of the case $c_m >0$ by continuity, as $c_m \to 0$.

Our existence result is the following

\begin{teo}\label{teoex}
Let $m\in[1,2]$ and $s>m.$
Suppose that~(\ref{eq3})-(\ref{eq3''})-(\ref{eq3'}) hold true. 
If $f\in L^n_{\rm loc}(\mathbb R^n),$  then the equation (\ref{eq2}) has an $L^n$-entire viscosity solution.
\end{teo}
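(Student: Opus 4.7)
The plan is to follow the Brezis strategy of \cite{Br}, extended to the fully nonlinear setting in \cite{EFQ,GV}: solve Dirichlet problems on an exhausting sequence of balls $B_R\subset\R^n$ with zero boundary values, show that the resulting solutions $u_R$ enjoy \emph{local} $L^\infty$ bounds uniform in $R$ (thanks to the superlinear absorption $|u|^{s-1}u$), and pass to the limit $R\to+\infty$ by interior regularity and stability of $L^n$-viscosity solutions of uniformly elliptic fully nonlinear equations.

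First I would construct, for each $R>0$, an $L^n$-viscosity solution $u_R$ of
\[
F(x,D^2u_R)+H(x,Du_R)-|u_R|^{s-1}u_R=f(x)\ \text{in}\ B_R,\qquad u_R=0\ \text{on}\ \partial B_R,
\]
by Perron's method in the $L^n$-viscosity framework. Large positive (resp.\ negative) constants serve as supersolutions (resp.\ subsolutions) because of the absorption term, and the assumption $m\le 2$ combined with (\ref{eq3'}) places the problem inside the available existence theory for $(\lambda,\Lambda)$-uniformly elliptic fully nonlinear equations with at most quadratic gradient growth; the comparison needed to run Perron's argument is used only against constants.

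The heart of the proof is the uniform local bound
\[
\|u_R\|_{L^\infty(B_r)}\le K(r,f)\quad\text{for every } R>r+1.
\]
To obtain it I would build a radial supersolution of the form
\[
w(x)=A\bigl(R^2-|x|^2\bigr)^{-\alpha}+\Psi(x),
\]
with $\alpha>\max\{2/(s-1),\,m/(s-m)\}$ and $\Psi$ a bounded local correction absorbing the $f$-contribution on compact subsets (produced, for instance, via an ABP-type estimate applied to a cutoff of $f$). Near $\partial B_R$ the three main terms of the equation, evaluated on $w$, scale as
\[
|F(x,D^2w)|\lesssim(R-|x|)^{-\alpha-2},\quad |H(x,Dw)|\lesssim(R-|x|)^{-m(\alpha+1)},\quad w^s\gtrsim(R-|x|)^{-\alpha s},
\]
and the choice of $\alpha$ gives both $\alpha s>\alpha+2$ and $\alpha s>m(\alpha+1)$, so the absorption dominates both the second-order and the gradient contributions. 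Hence $w$ is a supersolution near the boundary and, since $w=+\infty$ on $\partial B_R$, the $L^n$-viscosity comparison principle yields $u_R\le w$ in $B_R$; in particular $\sup_{B_r}u_R$ is controlled by the value of $w$ on $B_r$, a quantity depending only on $r$ and the local norm of $f$. Applying the same argument to $-u_R$ produces the matching lower bound.

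With the local $L^\infty$ bound uniform in $R$, interior H\"{o}lder (and actually $C^{1,\alpha}$) estimates for $L^n$-viscosity solutions of $(\lambda,\Lambda)$-uniformly elliptic equations with superlinear first-order term and $L^n_{\rm loc}$ right-hand side provide equicontinuity of the family $\{u_R\}$ on compact subsets of $\R^n$. A diagonal Arzel\`{a}--Ascoli extraction yields $u_{R_k}\to u$ locally uniformly, and the stability theorem for $L^n$-viscosity solutions guarantees that $u\in C(\R^n)$ is an entire $L^n$-viscosity solution of (\ref{eq2}), as required. The main obstacle is the calibration of the Osserman barrier: the superlinear gradient forces the extra constraint $\alpha>m/(s-m)$, which is impossible when $s\le m$ (so the hypothesis $s>m$ is essentially sharp for this method), while $m\le 2$ is what allows the Hessian part of the barrier to still dominate its gradient part through the uniform ellipticity; integrating the $L^n_{\rm loc}$ datum $f$ into a correction $\Psi$ that does not blow up with $R$ is the final technical subtlety.
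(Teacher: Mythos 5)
Your overall plan---solve Dirichlet problems on an exhausting family of balls, derive uniform local sup-bounds from an Osserman-type barrier calibrated by the absorption $|u|^{s-1}u$, then pass to the limit by interior regularity and stability of $L^n$-viscosity solutions---is precisely the strategy of the paper (which cites Sirakov's existence theorem for the Dirichlet problems rather than running Perron by hand, a minor difference). There is, however, one substantive gap, and two smaller inaccuracies.

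The gap is the treatment of $f$. You relegate it to a correction $\Psi$ ``produced, for instance, via an ABP-type estimate applied to a cutoff of $f$'', and you rightly single this out as the ``final technical subtlety'' --- but that is exactly the crux, and you leave it unresolved. The paper does not attempt to construct such a $\Psi$. Instead it proves (Lemma~\ref{lemdiff}) that if $u$ is an $L^n$-viscosity subsolution and $\phi_R$ is a $W^{2,n}_{\rm loc}$ strong supersolution, then $w=u-\phi_R$ is an $L^n$-viscosity subsolution of the extremal inequality ${\cal P}^+_{\lambda,\Lambda}(D^2 w)+\gamma_1|Dw|+2^{m-1}\gamma_m|Dw|^m\ge f(x)$ on $\{w>0\}$, and it then invokes the Koike--\Swiech{} ABP estimate (Theorem~\ref{teoABP}) to bound $\sup_{\{w>0\}} w$ by $C\,r\|f^-\|_{L^n}$. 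In the superlinear ($m>1$) regime this ABP estimate carries the smallness constraint ${\rm diam}(\Omega)^{2-n/p}\|f^-\|_{L^p(\Omega)}<\hat\delta$, which is what forces the paper to establish the local bound (\ref{eq10}) only on \emph{small} balls $B_r(x_i)$ and then cover $B_k$ by finitely many of them. Your version, which first builds a barrier already incorporating $f$ and then compares $u_R\le w$ in $B_R$, would require either an actual comparison principle for $L^n$-viscosity sub/supersolutions of an equation with superlinear gradient growth in a large ball --- which is not available in this generality --- or the very surrogate (Lemma~\ref{lemdiff} plus ABP) that the paper uses. So you have not avoided the ABP step; you have hidden it inside an unconstructed $\Psi$.

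Two smaller points. First, the interior regularity you can safely invoke for $L^n$-viscosity solutions of $(\lambda,\Lambda)$-uniformly elliptic equations with (at most quadratic) superlinear gradient term and $L^n$ right-hand side is the $C^\alpha$ estimate of Sirakov \cite{Sir}; $C^{1,\alpha}$ would require convexity or concavity of $F$. The $C^\alpha$ bound is sufficient for the Arzel\`a--Ascoli extraction and the stability theorem, which is how the paper concludes. Second, your exponent condition $\alpha>\max\{2/(s-1),\,m/(s-m)\}$ can be sharpened: the paper takes $\mu$ \emph{equal} to the relevant threshold in each of the two regimes of (\ref{eq8}), which is compatible with the computation in Lemma~\ref{lem1} because the identity $\mu s=\max\{\mu+2,(\mu+1)m\}$ there is an equality, not a strict inequality.
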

When reinforcing~(\ref{eq3})-(\ref{eq3''})-(\ref{eq3'}), we are able to prove uniqueness
of the solutions. First of all we suppose that
\begin{equation}\label{continuityassumption}
x\to F(x,X)\;\text{is continuous for any $X\in{\cal S}^n$}.
\end{equation}
Moreover  we assume that for $R>0$ there exists a modulus of continuity $\omega_R$ such that 
\begin{eqnarray}\label{struct-F}
F(x,X)-F(y,Y)\leq\omega_R(|x-y|+\varepsilon^{-1}|x-y|^2)
\end{eqnarray}
whenever $|x|,\,|y|<R$ and $X,\,Y\in{\cal S}^n$ satisfy
\begin{eqnarray}\label{mat-ineq}
-\frac3\varepsilon \left(\begin{array}{cc}
I & 0\\
0 & I
\end{array}\right)\leq\left(\begin{array}{cc}
X & 0\\
0 & -Y
\end{array}\right)\leq\frac3\varepsilon \left(\begin{array}{cc}
I & -I\\
-I & I
\end{array}\right).
\end{eqnarray}
and there exist $\gamma_1,\gamma_m > 0$, $m>1$ and a modulus of continuity $\omega: [0,+\infty) \to [0,+\infty)$
such that
\begin{equation}\label{eq3-H}
\begin{split}
 |H(x,p+q)-H(y,p)|&\leq\, \omega(|x-y|)(|p|^m+1)\\
&+\,\left(\gamma_1+\gamma_m(|p|^{m-1}+|q|^{m-1})\right)|q|
\end{split}
\end{equation}
for $x,y,p,q\in\mathbb R^n$. 
Notice that~(\ref{eq3-H}) implies~(\ref{eq3'}), with a different choice of $\gamma_m$,
and combined with (\ref{eq3''}) yields $|H(x,p)|\leq \gamma (|p|^m+1)$ with $\gamma=\max(\gamma_1,\gamma_m)$.
We also assume
\begin{equation}\label{homogeneity}
F(x,\sigma X)=\sigma F(x,X)\quad
\text{for all $\sigma\in(0,1),$ $x\in\R^n,$ $X\in {\cal S}^n,$}
\end{equation}
and a convexity-type assumption on $H$: there exist $\underline{c},A>0$
and $\sigma_0\in (0,1)$
such that
\begin{equation}\label{eq3-convexe}
H(x,p)-\sigma H(x,\sigma^{-1}p)\leq (1-\sigma)(-\underline{c}|p|^m +A)
\quad\text{for $x,p\in\mathbb R^n,$ $\sigma\in (\sigma_0,1).$}
\end{equation}
This assumption covers the model case~(\ref{prototype})
when $H(x,p)=c_1(x)|p|+c_m(x)|p|^m$ with $m>1,$ $c_i(x)$ 
bounded uniformly continuous in $\R^n$
and $c_m(x)\geq \underline{c}>0$ for some $\underline{c}>0.$\\
We collect additional examples of 
Hamiltonians satisfying~(\ref{eq3-H})-(\ref{eq3-convexe}) in the subsection \ref{examples}.

\noindent The assumptions~(\ref{eq3-H})-(\ref{eq3-convexe}) will be used to deal
with the superlinear nonlinearity $H(x,Du)$ when performing
a kind of linearization of~(\ref{eq2}) through a technique borrowed by Barles-Koike-Ley-Topp \cite{bklt15} and Koike-Ley \cite{KL}, in the proof of the following
uniqueness theorem.

\begin{teo}\label{uniqueness} 
Let $m\in(1,2]$ and $s>m$. 
Suppose that~(\ref{eq3})-(\ref{continuityassumption})-(\ref{struct-F})-(\ref{eq3-H})-(\ref{homogeneity})-(\ref{eq3-convexe}) hold true. 
If $f$ is continuous with
\begin{equation}\label{eq2uniq}
\limsup_{|x|\to\infty}\frac{f^-(x)}{|x|^{\rho}}<\infty \quad
\text{for} \ \ 
\rho < \left\{\begin{array}{cl}
\frac{m(s-1)}{(m-1)s} & \hbox{\rm if} \ 1 < m \le \frac{2s}{s+1} \vspace{0.25cm}\\
\frac{2(s-m)}{s(m-1)} & \hbox{\rm if} \ \frac{2s}{s+1} < m\,,
\end{array}\right.
\end{equation}
then (\ref{eq2}) admits a unique entire continuous viscosity solution.\\
The same result holds if $-H$ satisfies~(\ref{eq3-convexe})
and~(\ref{eq2uniq}) holds with $f^+$ instead of $f^-.$
\end{teo}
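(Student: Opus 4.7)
The plan is to prove a comparison principle: any continuous viscosity subsolution $u$ and supersolution $v$ of~(\ref{eq2}) satisfy $u\le v$ in $\R^n$. The principal tool, borrowed from~\cite{bklt15,KL}, is a scaling-linearization of the supersolution. For $\sigma\in(\sigma_0,1)$, set $v_\sigma:=\sigma v$. If $v_\sigma-\phi$ has a local minimum at $\bar x$, then so does $v-\phi/\sigma$; applying the supersolution inequality to $v$ with test function $\phi/\sigma$, multiplying by $\sigma$, and invoking the homogeneity~(\ref{homogeneity}) of $F$ together with~(\ref{eq3-convexe}), one obtains that $v_\sigma$ is itself a viscosity supersolution of
\begin{equation*}
F(x,D^2 v_\sigma) + H(x,Dv_\sigma) - \sigma^{1-s}|v_\sigma|^{s-1}v_\sigma \leq \sigma f(x) + (1-\sigma)A - (1-\sigma)\underline c\,|Dv_\sigma|^m.
\end{equation*}
The key new term $-(1-\sigma)\underline c|Dv_\sigma|^m$ is the coercive gain that will absorb the $O(|p|^m)$ error coming from the Hamiltonian difference in the doubling-of-variables step. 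Sending $\sigma\to 1^-$ at the end recovers the conclusion.

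With $v_\sigma$ in hand, I would run a localized Ishii-Lions doubling. Introduce
\begin{equation*}
\Phi(x,y) = u(x) - v_\sigma(y) - \frac{|x-y|^2}{\varepsilon} - \mu\psi(x),
\end{equation*}
where $\psi(x)=(1+|x|^2)^{\rho'/2}$ with $\rho'$ chosen compatibly with~(\ref{eq2uniq}). Assuming for contradiction that $\sup_{\R^n}(u-v)>0$, a Keller-Osserman type a priori bound on $u,v$---derived from the absorption $-|u|^{s-1}u$, the growth~(\ref{eq3-H}) of $H$, and the growth~(\ref{eq2uniq}) of $f^-$---guarantees, for $\mu>0$ small and $\sigma$ close to $1$, that $\Phi$ attains a positive maximum at a compact point $(\bar x_\varepsilon,\bar y_\varepsilon)$. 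Ishii's lemma produces matrices $X,Y$ satisfying~(\ref{mat-ineq}); subtracting the subsolution inequality for $u$ at $\bar x_\varepsilon$ from the modified supersolution inequality for $v_\sigma$ at $\bar y_\varepsilon$, estimating the $F$-difference by~(\ref{struct-F}), the $H$-difference by~(\ref{eq3-H}) with $p=2(\bar x_\varepsilon-\bar y_\varepsilon)/\varepsilon$ and $q=\mu D\psi(\bar x_\varepsilon)$, and exploiting the monotonicity of $r\mapsto|r|^{s-1}r$ in the absorption, one arrives at an inequality whose dominant term in $|p|$ is $[\omega(|\bar x_\varepsilon-\bar y_\varepsilon|)-(1-\sigma)\underline c]|p|^m$. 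Since $|\bar x_\varepsilon-\bar y_\varepsilon|\to 0$ as $\varepsilon\to 0$, the bracket becomes negative and this $|p|^m$-term is killed; sending then $\mu\to 0$ and finally $\sigma\to 1^-$ delivers the contradiction.

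The hard part is the calibration of the penalty $\psi$ against the a priori growth of $u,v$. The penalty must grow fast enough to localize $\Phi$ (since solutions can be only polynomially bounded), but slowly enough that the errors $\mu|D\psi|^m$ in the Hamiltonian and $\mu D^2\psi$ in the matrix estimate are absorbed by the coercive gain $(1-\sigma)\underline c|p|^m$ together with the monotone absorption. This is precisely where the two regimes in~(\ref{eq2uniq}) come from: for $1<m\le 2s/(s+1)$ the gradient term balances the absorption, giving $\rho<m(s-1)/((m-1)s)$, while for $m>2s/(s+1)$ it is the second-order term that balances it, giving $\rho<2(s-m)/(s(m-1))$. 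Deriving the matching Keller-Osserman bounds, tuning $\rho'$ to them, and carefully handling the sign of $v$ at the contact point when writing the absorption difference $|u|^{s-1}u - \sigma|v|^{s-1}v$ is the principal technical effort; once these are in place, the $\varepsilon,\mu,\sigma$ limits close in a largely standard fashion. The variant in which $-H$ satisfies~(\ref{eq3-convexe}) and~(\ref{eq2uniq}) holds with $f^+$ is handled symmetrically, modifying the subsolution $u_\sigma:=\sigma u$ instead.
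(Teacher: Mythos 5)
Your strategy is related to but structurally different from the paper's, and there is one concrete issue that keeps the sketch from closing. Both you and the paper exploit the same algebraic engine: scaling one of the two solutions by $\sigma<1$, using the homogeneity~(\ref{homogeneity}) together with the convexity assumption~(\ref{eq3-convexe}), produces a coercive gain $-(1-\sigma)\underline c|p|^m$ which absorbs the error terms from the doubled Hamiltonian. And both proofs rest on the Keller--Osserman a priori growth bound (Lemma~\ref{lemuniq}). The divergence is in how the contact point is localized and how the linearization is fed into the comparison.

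The paper separates the two tasks. It first establishes (Lemma~\ref{lem-lineariz}) that $w_\sigma=u-\sigma v$ is, pointwise, a viscosity subsolution of the single extremal PDE~(\ref{eqdiff}); the Ishii doubling and the $\varepsilon\to 0$ limit are entirely confined to that lemma and consumed there. The comparison is then carried out against the Osserman barrier $\phi_R$, which is used directly as a test function for $w_\sigma$. Because $\phi_R$ blows up on $\partial B_R$, no extra growth penalty is needed, and the maximum point $x_R$ is automatically trapped in $B_R$. Crucially, the paper then couples the two parameters by setting $1-\sigma=KR^{-m'}$. This coupling is what makes the argument close: with that choice, $\phi_R(x_0)\to b\tilde\gamma^{1/(s-m)}K^{(1-m)/(s-m)}$ as $R\to\infty$, so $K$ can be tuned to make $\phi_R(x_0)$ small compared with $\theta$, and all the error terms $(1-\sigma)(f^-(x_R)+A)$ and $(\sigma-\sigma^s)|v|^{s-1}v(x_R)$ carry a prefactor $(1-\sigma)\sim R^{-m'}$ which beats the Keller--Osserman growth $R^{\tilde\mu s\rho/2}$ precisely when $\rho<2m'/(\tilde\mu s)$, i.e.\ under~(\ref{eq2uniq}). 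You instead fold everything into a single Ishii doubling with a smooth polynomial penalty $\mu\psi$, and propose to take $\varepsilon\to0$, then $\mu\to0$, then $\sigma\to1$.

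The gap is in the order of limits and, more precisely, in the absence of the coupling between $\mu$ and $\sigma$. The residual term left over after you absorb the Hamiltonian error by Young's inequality behaves like $\mu^m|D\psi(\bar x_\varepsilon)|^m/(1-\sigma)^{m-1}$. If you fix $\sigma$ and send $\mu\to0$, the contact point $\bar x_\varepsilon$ is free to escape to infinity at a rate $\sim\mu^{-1/(\rho'-\tilde\mu\rho/2)}$ (from the localization constraint $\mu\psi(\bar x)\lesssim u(\bar x)-v_\sigma(\bar y)$), so $|D\psi(\bar x_\varepsilon)|$ grows and the residual need not vanish; conversely if you fix $\mu$ and send $\sigma\to1$, the factor $(1-\sigma)^{1-m}$ blows up. The two limits cannot be taken sequentially: you must specify a coupling $\mu=\mu(\sigma)$ (say $\mu\ll(1-\sigma)^{1/m'}$) and a choice of $\rho'$ threaded between $\tilde\mu\rho/2$ and the exponent $\tilde\mu s\rho/2/s$ at which the absorption $\delta(s)(\mu\psi)^s$ dominates, and you must then verify that the resulting constraints are satisfiable exactly under~(\ref{eq2uniq}). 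You correctly identify this calibration as ``the hard part,'' but leaving it unspecified leaves the proof incomplete: without the coupling it is not just a routine check, the argument can actually fail. The paper's choice $1-\sigma=KR^{-m'}$, with the singular Osserman barrier in place of $\mu\psi$, is precisely what removes this difficulty and is not an optional stylistic simplification.
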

In particular, we get uniqueness for the solutions of~(\ref{prototype})
when $c_m(x)$ is a  bounded uniformly continuous function which satisfies either $c_m(x)\geq \underline{c}>0$
(convex Hamiltonian) or $c_m(x)\leq -\underline{c}<0$
(concave Hamiltonian). As far as the growth is concerned, 
we have uniqueness, for instance if $m=2$ and $s>2$ 
provided~(\ref{eq2uniq}) holds for some $\rho <\frac{2(s-2)}{s}.$
The continuity of $f$ ensures that the two notions of $L^n$-viscosity solutions 
and classical viscosity solutions are equivalent, see \cite{KT}.
For further comments about the assumptions, see Section~\ref{sec:unicite}.

\subsection{Examples of Hamiltonians satisfying~(\ref{eq3-H})-(\ref{eq3-convexe})}\label{examples}

The Hamiltonian $H(x,p)=c(x)|p|^m+a(x)|p|^l$ 
satisfies~(\ref{eq3''})-(\ref{eq3-H})-(\ref{eq3-convexe})
if $c,a$ are bounded uniformly continuous in $\R^n,$ $c(x)\geq \underline{c}>0,$ 
$m>1$ and $0<l<m.$ 
To check the assumptions, we can take any $\sigma_0\in (0,1)$ and we 
use the inequalities $1-\sigma^{-s}\leq -s(1-\sigma)$ and 
$|\sigma -\sigma^s|\leq |1-s|(1-\sigma)$ for all
$s>0,$ $\sigma\in (0,1).$ For~(\ref{eq3-convexe}), we have
\begin{eqnarray*}
H(x,p)-\sigma H(x,\sigma^{-1}p) 
&=& c(x)(1-\sigma^{1-m})|p|^m +  a(x)(1-\sigma^{1-l})|p|^l\\
&\leq& (1-\sigma)
\left(-\underline{c}(m-1)|p|^m +\frac{||a||_\infty|l-1|}{\sigma_0^l}|p|^l\right)\\
&\leq&  (1-\sigma)\left(-\frac{\underline{c}(m-1)}{2}|p|^m +C\right),
\end{eqnarray*}
with $C=C(\underline{c},||a||_\infty,\sigma_0,m,l)$ by using Young inequality
\begin{eqnarray*}
\frac{||a||_\infty|l-1|}{\sigma_0^l} |p|^l\leq \frac{\underline{c}(m-1)}{2}|p|^m+C.
\end{eqnarray*}
To check~(\ref{eq3-H}), we use the inequality
$|p+q|^m-|p|^m\leq C(|p|^{m-1}+|q|^{m-1})|q|$ for $p,q\in\R^n,$
$m>0$ and $C=C(m).$ The constant $C$ below may vary line to line.
\begin{eqnarray*}
&& H(x,p+q)- H(y,p) \\
&=&
c(x)(|p+q|^m-|p|^m)+ a(x)(|p+q|^l-|p|^l)
+ (c(x)-c(y))|p|^m + (a(x)-a(y))|p|^l\\
&\leq&
C||c||_\infty(|p|^{m-1}+|q|^{m-1})|q|
+C||a||_\infty(|p|^{l-1}+|q|^{l-1})|q|
+\omega (|x-y|)(|p|^m+|p|^l)\\
&\leq&
C(|p|^{m-1}+|q|^{m-1}+1)|q|+ \omega (|x-y|)(|p|^m+1),
\end{eqnarray*}
by using Young inequality and where $\omega$ is a modulus of
continuity for $a,c.$

We may generalize the previous example by considering
$H(x,p)=\phi(x,|p|)|p|^m,$ with $m>1$ and
$\phi(\cdot, r)$ is bounded uniformly continuous in $\R^n$
uniformly with respect to $r\in \R_+,$
$\phi(x,r)\geq \phi_0 >0,$ $|\frac{\partial\phi}{\partial r}(x,r)|\leq C/r$
and $\phi(x,r)-\phi(x,\sigma^{-1}r)\leq C(1-\sigma)$ for all $\sigma\in (0,1).$
Details are left to the reader.

Notice that the previous example contains some nonconvex
Hamiltonians as $H(x,p)=c(x)\frac{(|p|^2-1)^2-1}{|p|^2+1}$ for instance.

Another class of examples is given by
\begin{eqnarray*}
H(x,p)=\mathop{\rm sup}_{\alpha\in A} \mathop{\rm inf}_{\beta\in B} \langle S_{\alpha\beta}(x)p,p\rangle^{m/2},
\end{eqnarray*}
where $m>1,$ $A,B$ are metric spaces, 
$S_{\alpha\beta}:\R^n\to\mathcal{S}^n$ is bounded uniformly
continuous uniformly with respect to $\alpha,\beta$
and there exists $\nu>0$ independent of $\alpha,\beta$ such
that $S_{\alpha\beta}(x)\geq \nu I.$
To prove~(\ref{eq3-convexe}), we notice that $H(x,p)\geq \nu^{m/2} |p|^m.$
Hence,
\begin{eqnarray*}
H(x,p)-\sigma H(x,\sigma^{-1}p)
= (1-\sigma^{1-m})H(x,p)
\leq -(1-\sigma)\nu^{m/2} |p|^m.
\end{eqnarray*}
For~(\ref{eq3-H}), we write
\begin{equation*}
\begin{split}
H(x,p+q)- H(y,p)
&\leq
\mathop{\rm sup}_{\alpha\in A,\beta\in B}
\{ \langle S_{\alpha\beta}(x)(p+q),p+q\rangle^{m/2}- \langle S_{\alpha\beta}(y)p,p\rangle^{m/2}\}\\
&=
\mathop{\rm sup}_{\alpha\in A,\beta\in B}
\{ \langle S_{\alpha\beta}(x)(p+q),p+q\rangle^{m/2}- \langle S_{\alpha\beta}(x)p,p\rangle^{m/2}\\
&+\langle S_{\alpha\beta}(x)p,p\rangle^{m/2}- \langle S_{\alpha\beta}(y)p,p\rangle^{m/2}\}\\
&\leq C\left(\left(|p|^{m-1}+|q|^{m-1}\right)|q|+\omega(|x-y|)|p|^{m}\right),
\end{split}
\end{equation*}
leading to~(\ref{eq3-H}) for a constant $C$ depending only on $m$ and $n$.

\section{Preliminaries}
We recall the definitions of  $L^p$-\emph{strong} and \emph{viscosity} solutions for second order elliptic equations
\begin{equation}\label{eq4}
G(x,u,Du,D^2u)=g(x)\quad\text{in $\Omega$},
\end{equation}
where $p\geq n$, $G:\Omega\times\mathbb R\times\mathbb R^n\times{\cal S}^n\mapsto\mathbb R$ and $g:\Omega\mapsto\mathbb R$ are measurable functions, $G$ is continuous in the last three variables, $\Omega$ is a domain (open connected set) and ${\cal S}^n$ denote the linear space of $n\times n$ real symmetric matrices equipped with the standard order:
$$X\leq Y\,\,\,\text{in ${\cal S}^n$}\Leftrightarrow \left\langle Xp,p\right\rangle\leq\left\langle Yp,p\right\rangle\,\,\, \forall p\in\mathbb R^n.$$
The identity matrix will be denoted by $I$ and the trace of $X\in{\cal S}^n$ with ${\rm Tr}(X)$.

We say that $G$  is $(\lambda,\Lambda)$-uniformly elliptic for $0<\lambda\leq\Lambda$ if 
\begin{equation}\label{eq5}
\lambda{\rm Tr}(Y)\leq G(x,r,p,X+Y)-G(x,r,p,X)\leq\Lambda{\rm Tr}(Y)\quad\text{a.e. $x\in\Omega$}
\end{equation}
for any $(r,p)\in\mathbb R\times\mathbb R^n$, $X,Y\in{\cal S}^n$ with $Y\geq0$, or equivalently
\begin{equation}\label{eq6}
{\cal P}^-_{\lambda,\Lambda}(Y-X)\leq G(x,r,p,Y)-G(x,r,p,X)\leq{\cal P}^+_{\lambda,\Lambda}(Y-X)\quad\text{a.e. $x\in\Omega$}
\end{equation}
for any $(r,p)\in\mathbb R\times\mathbb R^n$ and  $X,Y\in{\cal S}^n$. Here ${\cal P}^\pm_{\lambda,\Lambda}$ are the \emph{Pucci extremal operator} defined in the following way:
$${\cal P}^+_{\lambda,\Lambda}(X)=\sup_{\lambda I\leq A\leq\Lambda I}{\rm Tr}(AX),\quad {\cal P}^-_{\lambda,\Lambda}(X)=\inf_{\lambda I\leq A\leq\Lambda I}{\rm Tr}(AX).$$

\begin{defi}
\rm A function $u\in W^{2,p}_{\text{loc}}(\Omega)$
 is an $L^p$-strong subsolution, respectively supersolution, of (\ref{eq4}) if 
$$G(x,u(x),Du(x),D^2u(x))\geq g(x)\quad\text{a.e. in $\Omega$},$$
respectively
$$G(x,u(x),Du(x),D^2u(x))\leq g(x)\quad\text{a.e. in $\Omega$}\,.$$
We say that $u$ is an $L^p$-strong solution if it is both sub and supersolution.
\end{defi}

\begin{defi}\label{def1}
\rm A function $u\in C(\Omega)$ is called $L^p$-viscosity subsolution,  respectively supersolution, of (\ref{eq4}) provided for any $\varepsilon>0$, any open subset ${\cal O}\subset\Omega$ and any $\varphi\in W^{2,p}_{\text{loc}}(\Omega)$ such that 
$$
G(x,u(x),D\varphi(x),D^2\varphi(x))-g(x)\leq-\varepsilon\quad\text{a.e. in ${\cal O}$},
$$
respectively
$$
G(x,u(x),D\varphi(x),D^2\varphi(x))-g(x)\geq\varepsilon\quad\text{a.e. in ${\cal O}$},
$$
then $u-\varphi$ cannot have  a local maximum, respectively minimum, in ${\cal O}$.\\
Moreover if $u$ is both sub and supersolution then it is an $L^p$-viscosity solution.
\end{defi}

From definition \ref{def1} it is clear that $L^p$-viscosity notion imply the $L^q$ one for $q\geq p$ because of the inclusion $W^{2,q}_{\text{loc}}(\Omega)\subseteq W^{2,p}_{\text{loc}}(\Omega)$.\\
The test function $\varphi$ are continuous and twice differentiable a.e. \cite[Appendix C]{CCKS}. In the linear growth case the set of $L^p$-strong solution is a subset of the $L^p$-viscosity one. Conversely an $L^p$-viscosity solution that lies in $W^{2,p}_{\text{loc}}(\Omega)$ is an $L^p$-strong solution. For the proof and a general review of the $L^p$ theory of viscosity solution we refer to \cite{CCKS, CKSS}.
 In the superlinear case $L^p$-strong solutions continue to be  $L^p$-viscosity 
solution as stated in \cite[Theorem 3.1]{KS2}. 
Finally, we call continuous viscosity solutions, or $C$-viscosity solutions, the classical
notion of viscosity solutions (\cite{cil92, koike04}). 
When the data in the equations are continuous, they are equivalent
to $L^p$-viscosity solutions, see \cite{KT}.

  \begin{lem}\label{lemdiff}
Suppose that (\ref{eq3})-(\ref{eq3''})-(\ref{eq3'}) hold true. Let $u,v\in C(\Omega)$ be respectively $L^p$-viscosity sub and supersolution of 
\begin{equation}\label{eq12}
F(x,D^2u)+H(x,Du)-|u|^{s-1}u=f(x)
\end{equation}
and 
$$F(x,D^2v)+2^{m-1}\gamma_m|Dv|^m+\gamma_1|Dv|-|v|^{s-1}v=g(x)\,.$$
If $v\in W^{2,p}_{\rm loc}(\Omega)$ then the difference $w=u-v$ is an $L^p$-viscosity subsolution of the maximal equation
$${\cal P}^+_{\lambda,\Lambda}(D^2w)+2^{m-1}\gamma_m|Dw|^m+\gamma_1|Dw|=f(x)-g(x)$$
in $\left\{w>0\right\}$. 
\end{lem}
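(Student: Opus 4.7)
Since $v\in W^{2,p}_{\rm loc}(\Omega)$ is an $L^p$-viscosity supersolution, the equivalence recalled after Definition~\ref{def1} upgrades it to an $L^p$-strong supersolution, so its defining inequality holds pointwise a.e.\ in $\Omega$. With this in hand, I would run the standard subtraction argument: to verify that $w=u-v$ is an $L^p$-viscosity subsolution of the maximal equation in $\{w>0\}$, I take an open set ${\cal O}\subset\{w>0\}$, some $\varepsilon>0$, and a test function $\varphi\in W^{2,p}_{\rm loc}({\cal O})$ satisfying a.e.\ in ${\cal O}$
\[
{\cal P}^+_{\lambda,\Lambda}(D^2\varphi)+2^{m-1}\gamma_m|D\varphi|^m+\gamma_1|D\varphi|-(f-g)\le -\varepsilon,
\]
and I plan to use $\psi:=v+\varphi\in W^{2,p}_{\rm loc}({\cal O})$ as a test function for $u$. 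Since $u-\psi=w-\varphi$, excluding a local maximum of $u-\psi$ in ${\cal O}$ is the same as excluding one for $w-\varphi$.

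To invoke the $L^p$-viscosity subsolution property of $u$ at $\psi$, I must check that
\[
F(x,D^2\psi)+H(x,D\psi)-|u|^{s-1}u-f\le -\varepsilon\quad\text{a.e.\ in }{\cal O}.
\]
Uniform ellipticity~(\ref{eq3}) gives $F(x,D^2\psi)\le F(x,D^2v)+{\cal P}^+_{\lambda,\Lambda}(D^2\varphi)$. For the Hamiltonian, the pointwise bound $|H(x,p)|\le \gamma_1|p|+\gamma_m|p|^m$, a consequence of~(\ref{eq3''})--(\ref{eq3'}), combined with the convexity inequality $|a+b|^m\le 2^{m-1}(|a|^m+|b|^m)$, valid for $m\ge 1$, yields
\[
H(x,D\psi)\le \gamma_1|Dv|+\gamma_1|D\varphi|+2^{m-1}\gamma_m|Dv|^m+2^{m-1}\gamma_m|D\varphi|^m.
\]
Adding these two estimates and then using the a.e.\ strong supersolution inequality for $v$ to cancel the $|Dv|$-terms exactly---the coefficients $2^{m-1}\gamma_m$ and $\gamma_1$ in the equation for $v$ are designed for precisely this---leaves
\[
F(x,D^2\psi)+H(x,D\psi)\le g+|v|^{s-1}v+{\cal P}^+_{\lambda,\Lambda}(D^2\varphi)+\gamma_1|D\varphi|+2^{m-1}\gamma_m|D\varphi|^m.
\]
In ${\cal O}\subset\{w>0\}$ one has $u>v$ and $t\mapsto|t|^{s-1}t$ is strictly increasing, so $|v|^{s-1}v-|u|^{s-1}u\le 0$; inserting the strict inequality satisfied by $\varphi$ then produces the required $-\varepsilon$ margin, and the subsolution property of $u$ rules out a local maximum of $u-\psi=w-\varphi$ in ${\cal O}$.

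I do not anticipate any serious obstacle: once $v$ is handled a.e., everything reduces to pointwise algebra. The only mildly delicate point is the bookkeeping of the coefficient $2^{m-1}\gamma_m$---the apparently peculiar form of the equation satisfied by $v$ is chosen precisely so that the bound $|Dv+D\varphi|^m\le 2^{m-1}(|Dv|^m+|D\varphi|^m)$ produces an exact cancellation of all $|Dv|$-dependent contributions, leaving a clean residual in $D\varphi$ on the right. Note also that the hypothesis $m\in[1,2]$ stated in Theorem~\ref{teoex} is not used here; the lemma works for any $m>1$ allowed by~(\ref{eq3'}).
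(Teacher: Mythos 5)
Your proposal is correct and follows essentially the same route as the paper's proof: test $u$ with $\psi=v+\varphi$, split $F$ by uniform ellipticity, bound $H$ via $|H(x,p)|\le\gamma_1|p|+\gamma_m|p|^m$ and $|a+b|^m\le 2^{m-1}(|a|^m+|b|^m)$, cancel the $v$-terms using the a.e.\ strong supersolution inequality, and finish with the monotonicity of $t\mapsto|t|^{s-1}t$ on $\{w>0\}$. Your closing observation that $m\in[1,2]$ is not actually needed is also made in the paper, in the remark immediately following the lemma.
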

\begin{remark} As it will be clear from the proof, the result continues to hold if $m>2$.
\end{remark}
\begin{proof}
By contradiction assume that there exist $\varepsilon>0$, $\varphi\in W^{2,p}_{\rm loc}(\left\{w>0\right\})$, ${\cal O}\subset\left\{w>0\right\}$ open such that 
$${\cal P}^+_{\lambda,\Lambda}(D^2\varphi(x))+2^{m-1}\gamma_m|D\varphi(x)|^m+\gamma_1|D\varphi(x)|-(f(x)-g(x))\leq-\varepsilon\quad\text{a.e. in ${\cal O}$}$$
and $w-\varphi$ has a local maximum in ${\cal O}$. Thus $v+\varphi$ is a test function for $u$ and using the assumptions (\ref{eq3})-(\ref{eq3''})-(\ref{eq3'})
\begin{equation*}
\begin{split}
F(x,D^2(v+\varphi)(x))&+H(x,D(v+\varphi)(x))-|u(x)|^{s-1}u(x)-f(x)\\
&\leq F(x,D^2v(x))+{\cal P}^+_{\lambda,\Lambda}(D^2\varphi(x))+\gamma_m|D(v+\varphi)(x)|^m+\gamma_1|D(v+\varphi)(x)|\\
&-|u(x)|^{s-1}u(x)-f(x)\\
&\leq F(x,D^2v(x))+2^{m-1}\gamma_m|Dv(x)|^m+\gamma_1|Dv(x)|\\
&+{\cal P}^+_{\lambda,\Lambda}(D^2\varphi(x))+2^{m-1}\gamma_m|D\varphi(x)|^m+\gamma_1|D\varphi(x)|
-|u(x)|^{s-1}u(x)-f(x)\\
&\leq |v(x)|^{s-1}v(x)-|u(x)|^{s-1}u(x)-\varepsilon\\
&\leq-\varepsilon\quad\text{a.e. in ${\cal O}$}
\end{split}
\end{equation*}
a contradiction because $u$ is a subsolution of (\ref{eq12}).
\end{proof}
 
A fundamental tool we will use in the sequel  is the  ABP-estimate for solutions of uniformly elliptic equations. The classical ABP inequality states that in a bounded domain $\Omega$ 
$$\sup_\Omega u\leq\sup_{\partial\Omega}u+C{\rm diam}(\Omega)\left\|f^-\right\|_{L^n(\Omega)}$$
for any solution $u\in C(\Omega)$ of the maximal inequality ${\cal P}^+_{\lambda,\Lambda}(D^2u)+\gamma|Du|\geq f(x)$, where $C=C(n,\lambda,\Lambda,\gamma\,{\rm diam}(\Omega))$.
Such result has been extended in the  case $m>1$ of superlinear growth  in the gradient by Koike-\Swiech \cite{KS1}. 
In order to get the following ABP-estimates, deduced by \cite[Theorems 3.1-3.2]{KS1}, we also need the restriction $m\leq2$.
\begin{teo}\label{teoABP}
Let ${\rm diam}(\Omega)\leq1$ and let $u\in C(\overline\Omega)$ be an $L^p$-viscosity subsolution (resp. supersolution), $p\geq n$, of
$${\cal P}^+_{\lambda,\Lambda}(D^2u)+\gamma_1 |Du|+\gamma |Du|^m=f(x)\quad\text{in $\Omega$},$$
$$\left(resp.\,\,\,\,{\cal P}^-_{\lambda,\Lambda}(D^2u)-\gamma_1 |Du|-\gamma |Du|^m=f(x)\quad\text{in $\Omega$}\right),$$
with $\gamma_1$, $\gamma>0$, $m\in[1,2]$ and $f\in L^p(\Omega)$.\\ 
There exist two positive constants $$\hat{\delta}=\hat{\delta}(m,n,p,\gamma_1,\gamma,\lambda,\Lambda)<1,\, C=C(m,n,p,\gamma_1,\gamma,\lambda,\Lambda)$$ such that if
$${\rm diam}(\Omega)^{2-\frac np}\left\|f^-\right\|_{L^p(\Omega)}<\hat{\delta}$$
$$\left(resp. \,\,\,\,{\rm diam}(\Omega)^{2-\frac np}\left\|f^+\right\|_{L^p(\Omega)}<\hat{\delta}\right)$$
then
$$
\sup_{\Omega}u^+\leq\sup_{\partial \Omega}u^++C\,{\rm diam}(\Omega)^{2-\frac np}\left\|f^-\right\|_{L^p(\Omega)}
$$
$$
\hskip-1cm\left( resp.\,\,\,\,\sup_{\Omega}u^-\leq\sup_{\partial \Omega}u^-+C\,{\rm diam}(\Omega)^{2-\frac np}\left\|f^+\right\|_{L^p(\Omega)}\right),
$$
where $u^\pm=\max(\pm u,0)$.
\end{teo}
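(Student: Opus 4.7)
The plan is to deduce the estimate from the ABP-type inequalities with superlinear gradient term established by Koike-\Swiech~\cite{KS1} via the standard truncation that reduces the problem to the case of zero boundary values. I treat the subsolution case in detail; the supersolution case is parallel, using the second of the two theorems in \cite{KS1} in place of the first (or equivalently, applying the substitution $u\mapsto -u$, which exchanges ${\cal P}^+_{\lambda,\Lambda}$ with ${\cal P}^-_{\lambda,\Lambda}$ and $f^-$ with $f^+$). Set $M:=\sup_{\partial\Omega}u^+\geq 0$. If $\sup_\Omega u\leq M$ the conclusion is trivial, so assume the open set $\Omega_M:=\{x\in\Omega:u(x)>M\}$ is nonempty, and define $v:=u-M$ on $\overline{\Omega_M}$. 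By continuity of $u$ in $\Omega$ and by the definition of $M$, one has $v=0$ on $\partial\Omega_M\cap\Omega$ and $v\leq 0$ on $\partial\Omega_M\cap\partial\Omega$, so $v\leq 0$ on $\partial\Omega_M$.

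Next, since translating by a constant does not affect the first- or second-order derivatives entering the equation, the $L^p$-viscosity-subsolution property passes from $u$ to $v$, so that $v$ is an $L^p$-viscosity subsolution of
\begin{equation*}
{\cal P}^+_{\lambda,\Lambda}(D^2v)+\gamma_1|Dv|+\gamma|Dv|^m=f(x) \quad\text{in } \Omega_M,
\end{equation*}
with $v\leq 0$ on $\partial\Omega_M$. I would then apply Theorem~3.1 of \cite{KS1} on the subdomain $\Omega_M$. Its constants $\hat\delta,C$ depend only on $m,n,p,\gamma_1,\gamma,\lambda,\Lambda$; since ${\rm diam}(\Omega_M)\leq{\rm diam}(\Omega)\leq 1$ and $\|f^-\|_{L^p(\Omega_M)}\leq\|f^-\|_{L^p(\Omega)}$, the smallness hypothesis on $f^-$ is inherited. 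The theorem then yields
\begin{equation*}
\sup_{\Omega_M} v \;\leq\; C\,{\rm diam}(\Omega_M)^{2-n/p}\|f^-\|_{L^p(\Omega_M)} \;\leq\; C\,{\rm diam}(\Omega)^{2-n/p}\|f^-\|_{L^p(\Omega)},
\end{equation*}
and adding $M$ to both sides produces the claimed estimate on $\sup_\Omega u^+$.

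The substantive work of course sits in \cite{KS1} rather than in this deduction. The restriction $m\in[1,2]$ originates there: the Alexandrov-Bakelman-Pucci argument with superlinear gradient term relies on a quadratic-truncation/iteration scheme that closes precisely in this range, and the smallness condition on $\|f^-\|_{L^p}$ is indispensable, since in the superlinear regime explicit examples rule out any universal ABP bound without such a restriction. Within my reduction, the only technical point requiring care is the fact that the $L^p$-viscosity-subsolution property is preserved both under restriction to an open subset of $\Omega$ and under addition of a constant, both of which are immediate from Definition~\ref{def1} by the invariance of admissible $W^{2,p}_{\rm loc}$ test functions under the same operations.
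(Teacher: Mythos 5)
Your reduction to zero boundary values by truncating at $M=\sup_{\partial\Omega}u^+$ is standard and correct (though not strictly needed, since \cite[Theorems 3.1--3.2]{KS1} are already stated with the boundary term), but the core step is missing. Theorems 3.1--3.2 in \cite{KS1} are formulated for structure conditions with \emph{linear and quadratic} gradient terms, i.e.\ $\mathcal{P}^+_{\lambda,\Lambda}(D^2u)+b|Du|+\mu|Du|^2\geq f$ (the very title of \cite{KS1} says ``measurable and quadratic terms''), not for a general power $|Du|^m$. You apply \cite[Theorem 3.1]{KS1} directly to the $|Du|^m$ equation, which is not what that theorem covers when $1<m<2$; the restriction $m\in[1,2]$ in the statement is not, as you suggest, inherited wholesale from \cite{KS1} but is precisely what makes the reduction possible.

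The paper's entire proof of Theorem~\ref{teoABP} is the interpolation
\begin{equation*}
|p|^m\leq (2-m)|p|+(m-1)|p|^2,\qquad m\in[1,2],\ p\in\mathbb R^n,
\end{equation*}
(a consequence of the weighted AM--GM inequality applied to $|p|^m=(|p|)^{2-m}(|p|^2)^{m-1}$). With it, any subsolution of $\mathcal{P}^+_{\lambda,\Lambda}(D^2u)+\gamma_1|Du|+\gamma|Du|^m\geq f$ is automatically a subsolution of $\mathcal{P}^+_{\lambda,\Lambda}(D^2u)+(\gamma_1+(2-m)\gamma)|Du|+(m-1)\gamma|Du|^2\geq f$, to which \cite[Theorem 3.1]{KS1} applies and yields the claimed estimate with constants depending on $m,n,p,\gamma_1,\gamma,\lambda,\Lambda$. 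Without inserting this step your deduction does not go through, because the cited result does not cover the equation as you have written it.
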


\begin{proof}
It is a straightforward consequence of \cite[Theorems 3.1-3.2]{KS1} by 
using the interpolation inequality 
$|Du|^m \leq (2-m)|Du|+(m-1)|Du|^2$ for $m\in [1,2].$
\end{proof}

\section{Uniform Estimates}
We denote by $B_r(x)$ the open ball centered at $x\in\mathbb R^n$ with radius $r>0$. When $x=0$ we write for simplicity $B_r$.\\
For $m\in[1,2]$ and $s>m$ we consider the Osserman's barrier function
\begin{equation}\label{eq7}
\phi_R(x)=\frac{C_RR^\mu}{\left(R^2-|x|^2\right)^\mu},\quad |x|<R
\end{equation}
where the positive constant $C_R$ is to be fixed and
\begin{equation}\label{eq8}
\mu=\left\{\begin{array}{cl} 
\displaystyle\frac{2}{s-1}& \text{if  \,$\displaystyle1 \le m \le \frac{2s}{s+1}$} \vspace{0.25cm}\\
\displaystyle\frac{m}{s-m}& \text{if  \,$\displaystyle\frac{2s}{s+1} < m < s$}.
\end{array}\right.
\end{equation}

\begin{lem}\label{lem1}
For any $\gamma_1,\gamma\geq0$ and $\delta>0,$
there exists $C_R>0$ such that the function $\phi_R$ defined in (\ref{eq7})
satisfies the differential inequality
$$
{\cal P}^+_{\lambda,\Lambda}(D^2\phi_R)+\gamma_1\left|D\phi_R\right|+\gamma\left|D\phi_R\right|^m-\delta\phi_R^s\leq0
$$
in $B_R$ in the classical sense.
For instance, we may choose
\begin{equation}\label{eq9}
C_R=\max\left\{a(1+\gamma_1R)^{\frac{1}{s-1}}R^{\mu-\frac{2}{s-1}},\ 
b\gamma^{\frac{1}{s-m}}R^{\mu-\frac{m}{s-m}}\right\}
\end{equation}
with 
$a^{s-1}=4\mu\delta^{-1}\max\left\{\Lambda(1+n+2\mu),1\right\}$ and  
$b^{s-m}=2^{m+1}\mu^m\delta^{-1}$. 
\end{lem}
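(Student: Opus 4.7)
The plan is a direct calculation exploiting the radial structure of $\phi_R$. Setting $h(x):=R^2-|x|^2$, I would first compute
\[
D\phi_R = 2\mu C_R R^\mu h^{-\mu-1}\,x,\qquad
D^2\phi_R = 2\mu C_R R^\mu\bigl[h^{-\mu-1}I + 2(\mu+1) h^{-\mu-2}\,x\otimes x\bigr].
\]
Since $D^2\phi_R$ is positive semidefinite, ${\cal P}^+_{\lambda,\Lambda}(D^2\phi_R) = \Lambda\,{\rm Tr}(D^2\phi_R)$, and a short rearrangement gives
\[
{\rm Tr}(D^2\phi_R) = 2\mu C_R R^\mu h^{-\mu-2}\bigl[nR^2 + (2\mu+2-n)|x|^2\bigr] \leq 2\mu(n+2\mu+1)\,C_R R^{\mu+2}\,h^{-\mu-2},
\]
where the last inequality relies on $\max\{n,2(\mu+1)\}\leq n+2\mu+1$ (which holds since $n\geq 1$).

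The key observation is the relative orders in $h$ of the four quantities involved in the inequality: $h^{-\mu-2}$ for the trace, $h^{-\mu-1}$ for $\gamma_1|D\phi_R|$, $h^{-m(\mu+1)}$ for $\gamma|D\phi_R|^m$, and $h^{-\mu s}$ for $\delta\phi_R^s$. The definition~(\ref{eq8}) of $\mu$ is tailored so that
\[
\mu s \geq \mu+2 \quad\text{and}\quad \mu s \geq m(\mu+1),
\]
with equality in the first inequality when $1\leq m\leq 2s/(s+1)$ and in the second one otherwise. Hence, using $h\leq R^2$, each positive term admits an upper bound of the form (const)$\cdot R^{\ast}\,h^{-\mu s}$ with $\ast\geq 0$, ready to be absorbed by $\delta\phi_R^s$.

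Concretely, I would group the trace and $\gamma_1|D\phi_R|$ and, using $|x|\leq R$, $h\leq R^2$ and the elementary estimate $\Lambda(1+n+2\mu)+\gamma_1R\leq \max\{\Lambda(1+n+2\mu),1\}(1+\gamma_1R)$, bound their sum by
\[
2\mu\max\{\Lambda(1+n+2\mu),1\}\,(1+\gamma_1R)\,C_R R^{\mu+2}\,h^{-\mu-2}.
\]
Comparing this with $\tfrac{\delta}{2}\phi_R^s = \tfrac{\delta}{2}C_R^s R^{\mu s}h^{-\mu s}$ through $h^{-\mu-2}\leq R^{2(\mu s-\mu-2)}h^{-\mu s}$ reads off exactly the first alternative in~(\ref{eq9}). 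An entirely analogous treatment of $\gamma|D\phi_R|^m\leq \gamma(2\mu)^m C_R^m R^{m(\mu+1)}h^{-m(\mu+1)}$, compared with $\tfrac{\delta}{2}\phi_R^s$ via $h^{-m(\mu+1)}\leq R^{2(\mu s-m(\mu+1))}h^{-\mu s}$, recovers the second alternative. The whole argument amounts to careful bookkeeping of exponents; the only point requiring attention is to collect the constants tightly enough to match the scale-invariant form of~(\ref{eq9}), and the pointwise (classical) validity of the inequality is automatic because $\phi_R\in C^\infty(B_R)$.
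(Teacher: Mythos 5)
Your proof is correct and follows essentially the same line as the paper's: a direct radial computation, the observation that $D^2\phi_R\geq 0$ reduces the Pucci maximal operator to $\Lambda\,{\rm Tr}$, the same use of the exponent identity $\mu s=\max\{\mu+2,(\mu+1)m\}$ to absorb the lower-order terms via $h=R^2-|x|^2\leq R^2$, and the same splitting of $\delta\phi_R^s$ into two halves matched against the two groups (trace plus $\gamma_1$-term, and the $\gamma$-term), which is precisely what produces the two alternatives in the $\max$ defining $C_R$.
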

\begin{proof}
Put $r:=|x|$ and $\phi_R(x)=\varphi(r)=C_RR^\mu\left(R^2-r^2\right)^{-\mu}$. The choice (\ref{eq8}) guarantees that
\begin{equation}\label{mu s >2}
\mu s=\max\left\{\mu+2,(\mu+1)m\right\}>2;
\end{equation}
in this way, since all the curvatures of $\phi_R$ are positive, a straightforward computation yields
\begin{equation*}
\begin{split}
{\cal P}^+_{\lambda,\Lambda}(D^2\phi_R(x))&+\gamma_1 \left|D\phi_R(x)\right|+\gamma\left|D\phi_R(x)\right|^m-\delta\phi_R^s(x)\\
&=\Lambda\left(\varphi''+\frac{n-1}{r}\varphi'\right)+\gamma_1\varphi'+\gamma\left(\varphi'\right)^m-\delta\varphi^s\\
&=\frac{C_RR^{\mu }}{\left(R^2-r^2\right)^{\mu s}}\left[2\Lambda\mu\left(R^2+(1+2\mu)r^2\right)\left(R^2-r^2\right)^{\mu s-\mu-2}\right. \\
&+2\mu\left(\Lambda(n-1)+\gamma_1r\right)\left(R^2-r^2\right)^{\mu s-\mu-1}\\
&+2^m\gamma\mu^mC_R^{m-1}R^{(m-1)\mu}r^m\left(R^2-r^2\right)^{\mu s-(\mu+1)m}
-\delta C_R^{s-1}R^{(s-1)\mu}\Big]\\
&\leq
\frac{C_RR^{\mu s}}{\left(R^2-r^2\right)^{\mu s}}\Big[2\Lambda\mu\left(1+n+2\mu\right)R^{\mu(s-1)-2}\\
&+2\gamma_1\mu R^{\mu(s-1)-1}+(2\mu)^m\gamma C_R^{m-1}R^{\mu(s-m)-m}-\delta C_R^{s-1}\Big].
\end{split}
\end{equation*}
Using (\ref{eq9}) we conclude
\begin{equation*}
\begin{split}
&\;2\Lambda\mu\left(1+n+2\mu\right)R^{\mu(s-1)-2}+2\gamma_1\mu R^{\mu(s-1)-1}+(2\mu)^m\gamma C_R^{m-1}R^{\mu(s-m)-m}-\delta C_R^{s-1}\\
\leq&\;\delta C_R^{s-1} \left(\frac{a^{s-1}(1+\gamma_1R)R^{\mu(s-1)-2}}{2C_R^{s-1}}+\frac{b^{s-m}\gamma R^{\mu(s-m)-m}}{2C_R^{s-m}}-1\right)\leq0.
\end{split}
\end{equation*}
\end{proof}

Following the same line of proof in 
\cite[Lemma 3.2]{GV} we prove the following uniform estimates result in 
 ``small'' balls.

\begin{lem}\label{lem2}
Suppose that (\ref{eq3})-(\ref{eq3''})-(\ref{eq3'}) hold true. Let $f\in L^n_{\rm loc}(\mathbb R^n)$ and $r$ be a positive number small enough such that Theorem \ref{teoABP} holds true in $B_{2r}$ with $p=n$ and $\gamma=2^{m-1}\gamma_m$. If $u\in C(\overline B_{2r})$ is an $L^n$-viscosity subsolution of 
\begin{equation*}
F(x,D^2u)+H(x,Du)-|u|^{s-1}u=f(x)\quad\text{in $B_{2r}$}
\end{equation*}
then
\begin{equation}\label{eq10}
\sup_{B_r}u\leq \frac{C_0}{r^\mu}+C\,r\left\|f^-\right\|_{L^n(B_{2r})},
\end{equation}
where $C_0=C_0\left(m,n,s,\gamma_1,\gamma_m,\Lambda\right)$, $C=C(m,n,\gamma_1,\gamma_m,\lambda,\Lambda)$ are positive constants.
\end{lem}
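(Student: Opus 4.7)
The plan is to dominate $u$ from above on $B_{2r}$ by an Osserman-type barrier built from Lemma~\ref{lem1}, then read off the difference $u-\phi_{2r}$ as a subsolution of a maximal equation via Lemma~\ref{lemdiff}, and finally apply the ABP estimate of Theorem~\ref{teoABP}. Restricting the resulting bound to $B_r$ will give~(\ref{eq10}).

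First, I invoke Lemma~\ref{lem1} on $B_{2r}$, with $R=2r$, $\gamma=2^{m-1}\gamma_m$ and $\delta=1$, to obtain a smooth nonnegative function $\phi_{2r}$ on $B_{2r}$ that blows up at $\partial B_{2r}$ and satisfies
$$
\mathcal{P}^+_{\lambda,\Lambda}(D^2\phi_{2r})+\gamma_1|D\phi_{2r}|+2^{m-1}\gamma_m|D\phi_{2r}|^m\le \phi_{2r}^s
$$
classically in $B_{2r}$. Since $F(x,X)\le \mathcal{P}^+_{\lambda,\Lambda}(X)$ by~(\ref{eq3}) and $\phi_{2r}\ge 0$, this makes $\phi_{2r}$ a classical, hence $L^n$-strong, hence $L^n$-viscosity supersolution of $F(x,D^2v)+2^{m-1}\gamma_m|Dv|^m+\gamma_1|Dv|-|v|^{s-1}v=0$, which is exactly the form required by Lemma~\ref{lemdiff}.

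I then set $w:=u-\phi_{2r}$. By Lemma~\ref{lemdiff}, $w$ is an $L^n$-viscosity subsolution of
$$
\mathcal{P}^+_{\lambda,\Lambda}(D^2w)+2^{m-1}\gamma_m|Dw|^m+\gamma_1|Dw|=f(x)
$$
on the open set $\{w>0\}$. Since $\phi_{2r}\to+\infty$ at $\partial B_{2r}$ while $u\in C(\overline{B_{2r}})$, the set $\{w>0\}$ is relatively compact in $B_{2r}$, $w\equiv 0$ on its boundary, and $w\in C(\overline{\{w>0\}})$. The smallness hypothesis of Theorem~\ref{teoABP} holds on $\{w>0\}$ a fortiori, because it holds on $B_{2r}$ by assumption, so applying that theorem with $p=n$ (for which the diameter exponent equals $1$) yields
$$
\sup_{B_{2r}}w^+\le C\,r\,\|f^-\|_{L^n(B_{2r})}
$$
with $C=C(m,n,\gamma_1,\gamma_m,\lambda,\Lambda)$. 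Equivalently, $u\le \phi_{2r}+Cr\|f^-\|_{L^n(B_{2r})}$ on $B_{2r}$.

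The only genuinely delicate step will be to evaluate the barrier on $B_r$ uniformly in $r$. For $|x|\le r$ the denominator in~(\ref{eq7}) satisfies $4r^2-|x|^2\ge 3r^2$, hence $\phi_{2r}(x)\le C_{2r}(2/3)^\mu r^{-\mu}$, and the point is to show that $C_{2r}$ given by~(\ref{eq9}) with $R=2r$ stays bounded as $r\to 0$. This reduces to checking that the two $r$-exponents $\mu-\tfrac{2}{s-1}$ and $\mu-\tfrac{m}{s-m}$ are nonnegative, which follows from the identity $\mu=\max\bigl\{\tfrac{2}{s-1},\tfrac{m}{s-m}\bigr\}$ encoded in~(\ref{eq8}): in each of the two ranges of $m$ exactly one of these exponents vanishes and the other is strictly positive (this is precisely why $\mu$ was chosen that way). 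Restricting to $r\le 1$ then gives $C_{2r}\le C_0'(m,n,s,\gamma_1,\gamma_m,\Lambda)$ and thus $\phi_{2r}\le C_0\,r^{-\mu}$ on $B_r$, which combined with the ABP bound produces exactly~(\ref{eq10}).
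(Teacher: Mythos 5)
Your proof is correct and follows essentially the same route as the paper's: build the Osserman barrier $\phi_{2r}$ from Lemma~\ref{lem1} with $\gamma=2^{m-1}\gamma_m$, $\delta=1$, $R=2r$; observe that $\{w>0\}$ is compactly contained in $B_{2r}$ because $\phi_{2r}$ blows up at the boundary; pass the difference through Lemma~\ref{lemdiff}; and close with the ABP estimate of Theorem~\ref{teoABP}. You are somewhat more explicit than the paper in verifying that $C_{2r}$ from~(\ref{eq9}) stays bounded as $r\to 0$ via the identity $\mu=\max\left\{\tfrac{2}{s-1},\tfrac{m}{s-m}\right\}$, which is worth making visible (the only tiny imprecision is that at the threshold $m=\tfrac{2s}{s+1}$ both exponents vanish rather than exactly one, but this does not affect the boundedness conclusion).
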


\begin{proof}
Since $\phi_{2r}(x)\to\infty$ as $|x|\to2r$, we can find $r<\overline r<2r$ such that $\phi_{2r}\geq u$ in $B_{2r}\backslash B_{\overline r}$ and $\left\{u>\phi_{2r}\right\}\subseteq B_{\overline r}$. By means of Lemma \ref{lem1}, setting $\gamma=2^{m-1}\gamma_m$, $\delta=1$ and $R=2r$, we construct the Osserman's barrier function $\phi_{2r}$, which is an $L^n$-strong supersolution of
$$F(D^2\phi_{2r})+\gamma_1\left|D\phi_{2r}\right|+2^{m-1}\gamma_m\, \left|D\phi_{2r}\right|^m-\phi_{2r}^s=0\quad\text{in $B_{2r}$}$$
and thus the difference $w=u-\phi_{2r}$ satisfies the inequality
$$
{\cal P}_{\lambda,\Lambda}^+(D^2w)+\gamma_1\left|Dw\right|+ 2^{m-1}\gamma_m\left|Dw\right|^m\geq f(x)\quad\text{in $\left\{w>0\right\}$
}
$$
in $L^n$-viscosity sense in view of Lemma \ref{lemdiff}. Using Theorem \ref{teoABP} we have
$$
u(x)\leq\phi_{2r}(x)+Cr\left\|f^-\right\|_{L^n(B_{2r})}
$$
from which (\ref{eq10}) follows.
\end{proof}

Reasoning as in Lemma \ref{lem2} on the function $v=-u$ it is easy to prove the next
\begin{lem}\label{lem3}
Suppose that (\ref{eq3})-(\ref{eq3''})-(\ref{eq3'}) hold true. Let $f$ and $r$ as in Lemma \ref{lem2}. If $u\in C(\overline B_{2r})$ is an $L^n$-viscosity solution of 
\begin{equation*}
F(x,D^2u)+H(x,Du)-|u|^{s-1}u=f(x)\quad\text{in $B_{2r}$}
\end{equation*}
then
\begin{equation}\label{eq11}
\sup_{B_r}|u|\leq \frac{C_0}{r^{\mu}}+C\,r\left\|f\right\|_{L^n(B_{2r})},
\end{equation}
with $C_0$ and $C$ as in (\ref{eq10}).
\end{lem}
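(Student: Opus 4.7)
The plan is to reduce to Lemma \ref{lem2} by symmetrizing the equation. Since $u$ is an $L^n$-viscosity solution, it is in particular a subsolution, so Lemma \ref{lem2} applied directly yields
$$
\sup_{B_r}u^+\leq \frac{C_0}{r^\mu}+C\,r\left\|f^-\right\|_{L^n(B_{2r})}.
$$
It remains to control $\sup_{B_r}u^- = \sup_{B_r}(-u)$, and for this we set $v=-u$ and verify that $v$ is an $L^n$-viscosity subsolution of a problem of the same form as (\ref{eq12}), with a modified datum and modified (but structurally identical) nonlinearities.

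More precisely, writing $D^2u=-D^2v$, $Du=-Dv$ and $|u|^{s-1}u=-|v|^{s-1}v$, the equation for $u$ rewrites, for $v$, as
$$
\tilde F(x,D^2v)+\tilde H(x,Dv)-|v|^{s-1}v=-f(x),
$$
where $\tilde F(x,X):=-F(x,-X)$ and $\tilde H(x,p):=-H(x,-p)$. One checks immediately that $\tilde F$ is $(\lambda,\Lambda)$-uniformly elliptic with $\tilde F(x,0)=0$, that $\tilde H(x,0)=0$, and that $\tilde H$ inherits the Lipschitz-type bound (\ref{eq3'}) with the same constants $\gamma_1,\gamma_m$. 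Moreover, the standard change of variables for viscosity solutions turns a supersolution of the original equation into a subsolution of the mirrored one (apply Definition \ref{def1} with test functions $\psi=-\varphi$).

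Applying Lemma \ref{lem2} to $v$ with the datum $-f$ then gives
$$
\sup_{B_r}v\leq \frac{C_0}{r^\mu}+C\,r\left\|(-f)^-\right\|_{L^n(B_{2r})}=\frac{C_0}{r^\mu}+C\,r\left\|f^+\right\|_{L^n(B_{2r})},
$$
with the same constants $C_0,C$ since these depend only on the structural parameters preserved by the symmetrization. Combining the two estimates and using $\|f^\pm\|_{L^n(B_{2r})}\leq \|f\|_{L^n(B_{2r})}$ yields (\ref{eq11}).

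The main technical point, and really the only thing to verify carefully, is that the mirroring $X\mapsto -X$, $p\mapsto -p$ preserves all the structural assumptions (\ref{eq3})--(\ref{eq3'}) and the $L^n$-viscosity notion. There is no genuine analytic difficulty beyond that: once $v$ is recognized as a subsolution of an equation in the class covered by Lemma \ref{lem2}, the Osserman barrier argument is reused verbatim.
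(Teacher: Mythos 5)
Your proof is correct and follows exactly the route the paper indicates (the paper simply says ``Reasoning as in Lemma \ref{lem2} on the function $v=-u$'' without spelling out the details). The symmetrization $\tilde F(x,X)=-F(x,-X)$, $\tilde H(x,p)=-H(x,-p)$ preserves (\ref{eq3})--(\ref{eq3'}) and swaps the sub/super roles as you state, so applying Lemma \ref{lem2} to both $u$ and $v=-u$ and combining gives (\ref{eq11}).
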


\section{Existence}
In order to prove Theorem \ref{teoex} we will use the uniform bounds of Section 3.

\noindent
\emph{Proof of Theorem \ref{teoex}.} 
By \cite[Theorem 1 (i)]{Sir} we can solve any Dirichlet problem for the equation (\ref{eq2}) in the ball $B_k$, $k\in\mathbb N$, with continuous boundary condition. Choose a solution $u_k$ for any $k$. Let $\left\{B_{r}(x_i)\right\}_{i=1,\ldots,K}$ be a covering of $B_k$ such that
$$
B_k\subseteq\bigcup_{i=1}^KB_{r}(x_i)\subseteq\bigcup_{i=1}^KB_{2r}(x_i)\subseteq B_{k+1}
$$
and $r>0$, for $i=1,\ldots,K$, small enough as in Lemma \ref{lem3}. In this way for any $h>k$, using (\ref{eq11}), one has 
$$
\sup_{B_{r}(x_i)}|u_h|\leq \frac{C_0}{r^{\mu}}+C\,r\left\|f\right\|_{L^n(B_{2r}(x_i))}
$$
and 
$$
\sup_{B_k}|u_h|\leq\max_{i=1,\dots,K}\sup_{B_{r}(x_i)}|u_h|\leq C_1\left(1+\left\|f\right\|_{L^n(B_{k+1})}\right)
$$
with {$C_1=C_1(k,m,n,s,\gamma_1,\gamma_m,\lambda,\Lambda)$.} 
Using the $C^\alpha$-estimates \cite[Theorem 2]{Sir}
$$
\left\|u_h\right\|_{C^\alpha(B_k)}\leq C_2\left(1+\left\|f\right\|_{L^n(B_{k+1})}\right)
$$
for a positive constant $C_2$ independent of $h>k$. By a diagonal process we can extract a subsequence $u_{k_h}$ converging locally uniformly to a function $u\in C(\mathbb R^n)$. From the stability result of \cite[Theorem 4]{Sir} $u$ in an $L^n$-viscosity solution of (\ref{eq2}).
\hfill$\Box$

\section{Uniqueness}
\label{sec:unicite}

This Section is concerned with the uniqueness of $C$-viscosity entire solutions. As announced in the Introduction, we assume throughout that the Hamiltonian is actually superlinear, satisfying the convexity type assumption (\ref{eq3-convexe}), and refer to \cite{GV} for Lipschitz continuous Hamiltonians.\\
We start with a few remarks.

The condition $s\in (m,+\infty)$ in Theorem~\ref{teoex} is necessary in order to obtain a uniqueness result.
In fact  the functions
$$u(x_1,\ldots,x_i,\ldots,x_n)=\alpha\exp(\pm\sqrt{2}x_i)+1$$
are solutions in $\mathbb R^n$ of the equation 
$$\Delta u+\frac12|Du|^2-|u|u=-1$$
for any $\alpha\geq0$. Similarly the functions $v=-u$ satisfy $\Delta v-\frac12|Dv|^2-|v|v=1$ in $\mathbb R^n$.

Assumptions (\ref{eq3-H})-(\ref{eq3-convexe}) are needed to deal with the strong
superlinear nonlinearity in the Hamiltonian $H$ when
performing a kind of linearization, see Lemma~\ref{lem-lineariz} in the proof
of Theorem~\ref{uniqueness}.

The assumption~(\ref{eq2uniq}) gives a limiting growth on the data $f.$
Note that inequality
\begin{equation*}
\rho < \left\{\begin{array}{cl}
\frac{m(s-1)}{(m-1)s} & \hbox{\rm if} \ 1 < m \le \frac{2s}{s+1} \vspace{0.25cm}\\
\frac{2(s-m)}{s(m-1)} & \hbox{\rm if} \ \frac{2s}{s+1} < m\,,
\end{array}\right.
\end{equation*}
can be rewritten in the more synthetic way
\begin{equation}\label{val-rho}
\rho < \frac{2m'}{\mu s}
\end{equation}
where $\mu$ is introduced in (\ref{eq8}) and 
$m'$ is the conjugate of $m$
defined by  $\frac{1}{m}+\frac{1}{m'}=1$.

The following Lemma says how the growth of $u^\pm$ depends on the growth of 
$f^\mp$ at infinity.

\begin{lem}\label{lemuniq} 
Under the assumptions of Theorem~\ref{teoex},
let $\rho\geq0$ and assume
\begin{equation}\label{limsup:f-}
\limsup_{|x|\to\infty}\frac{f^-(x)}{|x|^{\rho}}=:l<\infty.
\end{equation}
Then any subsolution of (\ref{eq2}) satisfies 
$$
\limsup_{|x|\to\infty}\frac{(u^+)^s(x)}{ |x|^\frac{\mu s\rho}{2} } <\infty.
$$
where $\mu$ is defined in (\ref{eq8}).
The same result holds
replacing $f^-$ by $f^+,$ $u^+$ by $u^-$ and ``subsolution'' by ``supersolution''.

\end{lem}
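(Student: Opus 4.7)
The plan is to localize the equation at each far-away point $x_0$ using Lemma \ref{lem2}, with a judicious choice of the radius $r=r(|x_0|)$. Although Lemma \ref{lem2} is written for balls centered at the origin, its proof transfers verbatim to a ball $B_{2r}(x_0)$ centered at any $x_0$: the Pucci extremal operators, the assumptions~(\ref{eq3})-(\ref{eq3''})-(\ref{eq3'}), and the Osserman barrier $\phi_{2r}(\cdot-x_0)$ from Lemma \ref{lem1} are all translation-invariant. Thus, whenever $u$ is an $L^n$-viscosity subsolution of~(\ref{eq2}) on $B_{2r}(x_0)$ and the smallness and diameter conditions of Theorem \ref{teoABP} are satisfied, one obtains
\begin{equation*}
u(x_0)\leq\sup_{B_r(x_0)}u\leq\frac{C_0}{r^\mu}+Cr\,\|f^-\|_{L^n(B_{2r}(x_0))}.
\end{equation*}

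From~(\ref{limsup:f-}) there exist $M,K>0$ such that $f^-(y)\leq K|y|^\rho$ for $|y|\geq M$. If $|x_0|$ is sufficiently large and $r\leq |x_0|/4$, then every $y\in B_{2r}(x_0)$ satisfies $|y|\leq 2|x_0|$, hence
\begin{equation*}
\|f^-\|_{L^n(B_{2r}(x_0))}\leq K(2|x_0|)^\rho |B_{2r}|^{1/n}\leq C_1|x_0|^\rho\,r.
\end{equation*}
I then select $r=c\,|x_0|^{-\rho/2}$ with a small constant $c>0$, fixed independently of $x_0$. The ABP smallness condition reduces to $r\|f^-\|_{L^n(B_{2r}(x_0))}\leq C_1c^2$, which falls below $\hat\delta$ as soon as $c$ is small enough; the diameter condition and the requirement $r\leq |x_0|/4$ hold automatically for $|x_0|$ large. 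Substituting in the inequality above yields
\begin{equation*}
u(x_0)\leq C_0c^{-\mu}|x_0|^{\mu\rho/2}+CC_1c^2,
\end{equation*}
so $u^+(x_0)\leq C'|x_0|^{\mu\rho/2}$ for $|x_0|$ large, and raising to the $s$-th power gives the claim.

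For the supersolution case, the same reasoning applies after reducing to a minimal inequality in the spirit of Lemma \ref{lemdiff} (or equivalently by working with $v=-u$): the supersolution half of Theorem \ref{teoABP} then furnishes the analogous bound with $f^+$ replacing $f^-$ and $u^-$ replacing $u^+$, which is precisely the strategy indicated after Lemma \ref{lem3}.

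The main (modest) difficulty is the correct balancing of the two competing terms $C_0/r^\mu$ and $CC_1r^2|x_0|^\rho$: free minimization would suggest $r\sim|x_0|^{-\rho/(\mu+2)}$, but this choice is forbidden by the ABP smallness condition, which forces $r\lesssim|x_0|^{-\rho/2}$. The resulting loss of optimality compared with unconstrained minimization is exactly what produces the exponent $\mu s\rho/2$ appearing in the statement.
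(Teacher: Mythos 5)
Your proposal is correct and follows essentially the same path as the paper's own argument: localize to a ball of radius $r\sim|x_0|^{-\rho/2}$ about $x_0$, verify the ABP smallness condition using the growth bound on $f^-$, apply Lemma~\ref{lem2} to get $u(x_0)\lesssim r^{-\mu}+Cr\|f^-\|_{L^n}$, and then conclude the growth bound (the paper uses $r_0=\e_0|x_0|^{-\rho/2}$ in a ball $B_{r_0}(x_0)$, which is the same choice up to the normalization of the radius). The supersolution case is also handled the same way, via $v=-u$ and the observation that $-F(x,-\cdot)$, $-H(x,-\cdot)$ satisfy the structural hypotheses.
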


\begin{proof}
Let $\varepsilon_0>0$ be such that 
$$2^{2+\rho}(l+1)\omega_n^{\frac1n}\varepsilon_0^2<\hat\delta<1$$ 
as required by Theorem \ref{teoABP}, where $\omega_n$ is the volume of the unit ball in $\mathbb R^n$.
Set $r_0=\varepsilon_0|x_0|^{-\frac{\rho}{2}}$. For $|x_0|$ big enough, by assumption (\ref{limsup:f-}) we get
\begin{equation*}
\begin{split}
2r_0\left\|f^-\right\|_{L^n(B_{r_0}(x_0))}&\leq\,2(l+1)\,r_0\left\||x|^{\rho}\right\|_{L^n(B_{r_0}(x_0))}\\
&\,\leq2^{\rho+1}(l+1)\omega_n^{\frac1n}\left(r_0^{\rho+2}+\varepsilon_0^2\right)\\
&\,\leq2^{\rho+2}(l+1)\,\omega_n^{\frac1n}\varepsilon_0^2<\hat \delta,
\end{split}
\end{equation*}
by our choice of $\varepsilon_0$. 
In this way Theorem \ref{teoABP} applies in $B_{r_0}{(x_0)}$ and  
Lemma \ref{lem2} yields, for $x_0$ far away from the origin, the estimate
$$u(x_0)\leq\sup_{B_{\frac{r_0}{2}}(x_0)}u\leq\frac{C_1}{r_0^\mu}+C$$
with  $\mu$ given by (\ref{eq8}), where $C_1=2^{\mu}C_0$ 
. In this way 
$$(u^+)^s(x_0)\leq2^{s-1}\left(\frac{C_1^s}{r_0^{\mu s}}+C^s\right)=2^{s-1}\left(\frac{C_1^s}{\varepsilon_0^{\mu s}}
|x_0|^\frac{\mu s \rho}{2}+C^s\right)$$
and $$
\limsup_{|x|\to\infty}\frac{(u^+)^s(x)}{|x|^\frac{\mu s \rho}{2}}\leq2^{s-1}\frac{C_1^s}{\varepsilon_0^{\mu s}}$$
as claimed. We prove the second part of the lemma. If $u$ is a supersolution
of~(\ref{eq2}), then 
the function $v=-u$ is a subsolution in $\mathbb R^n$ of 
$$-F(x,-D^2v)-H(x,-Dv)-|v|^{s-1}v=-f(x),$$
where the operators $-F(x,-X)$ and $-H(x,-p)$ turn out to satisfy ~(\ref{eq3})-(\ref{eq3''})-(\ref{eq3'}), and so the first part yields
$$\limsup_{|x|\to\infty}\frac{(u^-)^s(x)}{|x|^\frac{\mu s \rho}{2}}=\limsup_{|x|\to\infty}\frac{(v^+)^s(x)}{|x|^\frac{\mu s \rho}{2}}<\infty.$$
\end{proof}

We now turn to the proof of the uniqueness theorem. We will use the inequality
\begin{equation}\label{eqdelta}
|u|^{s-1}u-|v|^{s-1}v>\delta(s)(u-v)^s\quad\text{for $u>v$ and $s>1,$}
\end{equation}
where $\delta(s)$ is a positive constant.

\begin{proof}[Proof of Theorem~\ref{uniqueness}]
By contradiction let us assume that $u$ and $v$ are both viscosity solutions of (\ref{eq2})
such that
\begin{equation}\label{contrad}
\theta:=u(x_0)-v(x_0)>0,\quad x_0\in\mathbb R^n.
\end{equation}
The following lemma performs a kind of linearization of the equation.
\begin{lem}\label{lem-lineariz}
Assume that $F$ satisfies~(\ref{continuityassumption}), (\ref{struct-F}), (\ref{homogeneity}), (\ref{eq6})
and $H$ satsifies~(\ref{eq3-H}), (\ref{eq3-convexe}).
For any $\sigma\in(0,1),$ the function $w_\sigma :=u-v_\sigma:=u-\sigma v$ is a subsolution
in $\mathbb R^n$ of the extremal PDE
\begin{eqnarray}\label{eqdiff}
&& {\cal P}^+_{\lambda,\Lambda}(D^2w_\sigma)+\gamma_1|Dw_\sigma|+(1-\sigma)^{1-m}\,\tilde{\gamma} |Dw_\sigma|^m  
\nonumber\\
&& \hspace*{0.7cm }-\left(|u|^{s-1}u-|v_\sigma|^{s-1}v_\sigma\right)+(\sigma-\sigma^s)|v|^{s-1}v
\geq (1-\sigma)(f(x)-A),
\end{eqnarray}
where 
\begin{eqnarray}\label{tildgam}
\tilde{\gamma} = \gamma_m + \frac{(m-1)^{m-1}\gamma_m^m}{m^m \underline{c}^{m-1}} 
\end{eqnarray}
and $\gamma_1,\gamma_m, \underline{c},A$ are the constants appearing in~(\ref{eq3-H})-(\ref{eq3-convexe}).
\end{lem}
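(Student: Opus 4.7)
The plan is in three stages: first convert the supersolution property of $v$ into a viscosity inequality for $v_\sigma=\sigma v$ using the homogeneity and convexity assumptions; second, subtract this from the subsolution inequality for $u$ via a doubling-of-variables argument; third, use Young's inequality to absorb the leftover $|Dv_\sigma|^m$ terms.

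For Stage~1, take a test function $\phi$ such that $v_\sigma-\phi$ has a local minimum at $\bar x$. Then $v-\sigma^{-1}\phi$ has a local minimum at $\bar x$, and the supersolution property of $v$ multiplied by $\sigma$ gives, after using the homogeneity $\sigma F(x,D^2(\phi/\sigma))=F(x,D^2\phi)$ from~(\ref{homogeneity}), the convexity bound
$$\sigma H(x,Dv)=\sigma H(x,\sigma^{-1}Dv_\sigma)\geq H(x,Dv_\sigma)+(1-\sigma)\underline{c}|Dv_\sigma|^m-(1-\sigma)A$$
from~(\ref{eq3-convexe}), and the algebraic identity $\sigma|v|^{s-1}v=|v_\sigma|^{s-1}v_\sigma+(\sigma-\sigma^s)|v|^{s-1}v$, that $v_\sigma$ is a viscosity supersolution of
$$F(x,D^2v_\sigma)+H(x,Dv_\sigma)-|v_\sigma|^{s-1}v_\sigma+(1-\sigma)\underline{c}|Dv_\sigma|^m\leq\sigma f+(1-\sigma)A+(\sigma-\sigma^s)|v|^{s-1}v.$$

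For Stage~2, proceed as in Lemma~\ref{lemdiff} but now at the purely $C$-viscosity level, since $v_\sigma$ is only continuous. If $w_\sigma-\varphi$ has a local maximum, one doubles variables with the penalization $u(x)-v_\sigma(y)-\varphi((x+y)/2)-\e^{-1}|x-y|^2$ and applies Ishii's lemma, producing matrices $X,Y$ fulfilling~(\ref{mat-ineq}). Assumption~(\ref{struct-F}) then controls the $F$-difference by $\mathcal{P}^+_{\lambda,\Lambda}(D^2\varphi)$ plus a vanishing term (using~(\ref{continuityassumption})), while~(\ref{eq3-H}) applied with $x=y$ (so $\omega(0)=0$), $p=Dv_\sigma$, $q=Du-Dv_\sigma=Dw_\sigma$, yields
$$H(x,Du)-H(x,Dv_\sigma)\leq\gamma_1|Dw_\sigma|+\gamma_m|Dw_\sigma|^m+\gamma_m|Dv_\sigma|^{m-1}|Dw_\sigma|.$$
Passing to the limit in the penalization produces a viscosity subsolution inequality for $w_\sigma$ that mixes the desired extremal operator with an extra $|Dv_\sigma|^m$ term, matched against $(1-\sigma)(f-A)+(1-\sigma)\underline{c}|Dv_\sigma|^m$ on the right.

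For Stage~3, apply Young's inequality with conjugate exponents $m'=m/(m-1)$ and $m$, rescaling a parameter so that the coefficient of $|Dv_\sigma|^m$ is exactly $(1-\sigma)\underline{c}$; a direct computation forces the coefficient of $|Dw_\sigma|^m$ to be exactly $(1-\sigma)^{1-m}\frac{(m-1)^{m-1}\gamma_m^m}{m^m\underline{c}^{m-1}}$. The $(1-\sigma)\underline{c}|Dv_\sigma|^m$ contribution cancels the one gained in Stage~1, and since $\sigma<1$ and $m>1$ one has $(1-\sigma)^{1-m}\geq 1$, so the remaining $|Dw_\sigma|^m$ coefficient is bounded by $(1-\sigma)^{1-m}\tilde{\gamma}$ with $\tilde{\gamma}$ given by~(\ref{tildgam}). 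Collecting terms yields~(\ref{eqdiff}) exactly.

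The main obstacle is Stage~2: Lemma~\ref{lemdiff} was proved under $v\in W^{2,p}_{\rm loc}$, which fails here, so one must redo the subtraction via Ishii's lemma. The assumptions~(\ref{continuityassumption})-(\ref{struct-F})-(\ref{eq3-H}) are precisely calibrated for this step, following the linearization technique of \cite{bklt15,KL}, and once this viscosity-level subtraction is secured, the algebraic manipulations of Stages~1 and~3 go through unchanged.
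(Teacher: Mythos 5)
Your overall plan mirrors the paper's: homogeneity reduces the $F$-term for $v_\sigma=\sigma v$, the convexity assumption~(\ref{eq3-convexe}) produces the good coercive term $(1-\sigma)\underline{c}|p|^m$, a doubling-of-variables/Ishii argument replaces the $W^{2,p}$ hypothesis of Lemma~\ref{lemdiff}, and Young's inequality with exponents $m,m'$ turns the cross term $\gamma_m|Dv_\sigma|^{m-1}|Dw_\sigma|$ into $(1-\sigma)\underline{c}|Dv_\sigma|^m+(1-\sigma)^{1-m}\frac{(m-1)^{m-1}\gamma_m^m}{m^m\underline{c}^{m-1}}|Dw_\sigma|^m$. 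The coefficient bookkeeping in Stage~3 is exactly right. Reorganizing the argument so that the convexity inequality for $v_\sigma$ is established first, as a standalone supersolution statement, is a legitimate variant of what the paper does inside the doubling.

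There is, however, a gap in Stage~2. You invoke~(\ref{eq3-H}) ``with $x=y$ (so $\omega(0)=0$)'' to obtain a clean bound with no $\omega$-contribution. In the doubled argument you actually have $x_\e\neq y_\e$, and~(\ref{eq3-H}) gives the extra term $\omega(|x_\e-y_\e|)\bigl(|p_\e|^m+1\bigr)$, where $p_\e=\e^{-1}(x_\e-y_\e)$. The standard penalization facts give $\e^{-1}|x_\e-y_\e|^2\to 0$, hence $\omega(|x_\e-y_\e|)\to 0$, but this does \emph{not} control $\omega(|x_\e-y_\e|)|p_\e|^m$: $|p_\e|$ is not a priori bounded for merely continuous $u,v_\sigma$, so the product can diverge. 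Consequently you cannot ``pass to the limit in the penalization'' and then do Young afterwards. The paper's proof resolves this by performing the Young-type optimization \emph{before} letting $\e\to0$, writing the problematic contribution as
$$\sup_{r\geq 0}\Bigl\{\bigl(-(1-\sigma)\underline{c}+\omega(|x_\e-y_\e|)\bigr)r^{\frac{m}{m-1}}+\gamma_m|D\phi(x_\e)|\,r\Bigr\},$$
which is finite precisely because $\e$ is taken small enough that $\omega(|x_\e-y_\e|)<(1-\sigma)\underline{c}$; the $\omega$-dependent denominator $\bigl((1-\sigma)\underline{c}-\omega(|x_\e-y_\e|)\bigr)^{m-1}$ is only replaced by $\bigl((1-\sigma)\underline{c}\bigr)^{m-1}$ at the very end, after $\e\to 0$. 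In your decomposition the good term $(1-\sigma)\underline{c}|p_\e|^m$ is indeed available (from Stage~1), so the fix is to absorb both $\omega(|x_\e-y_\e|)|p_\e|^m$ \emph{and} $\gamma_m|p_\e|^{m-1}|D\phi(x_\e)|$ into it at the $\e$-level, and only then take $\e\to 0$; as written, the order of your Stages 2 and 3 conceals this step and does not quite close the argument.
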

A proof of the lemma is provided in the Appendix.

For $\sigma$ close to 1 $$w_\sigma(x_0)>\frac\theta2.$$
Applying Lemma~\ref{lem1} with
$\gamma= (1-\sigma )^{1-m} \tilde{\gamma}$ and $\delta=\delta(s)$
given by (\ref{eqdelta}),
the function 
$$\phi_R(x)=\frac{C_RR^\mu}{(R^2-|x|^2)^\mu},\,\quad |x_0|<R,$$
is a solution in $B_R$ of
\begin{equation}\label{eqosserman}
{\cal P}^+_{\lambda,\Lambda}(D^2\phi_R) +
\gamma_1 \left|D \phi_R \right|+(1-\sigma )^{1-m} \tilde{\gamma}\left|D \phi_R \right|^m
-\delta(s) \phi _R^s\leq0
\end{equation}
for 
\begin{equation*}
C_R=\max\left\{a(1+\gamma_1 R)^{\frac{1}{s-1}}R^{\mu-\frac{2}{s-1}},\ 
b (1-\sigma)^{\frac{1-m}{s-m}}\tilde{\gamma}^{\frac{1}{s-m}}
R^{\mu-\frac{m}{s-m}}\right\}.
\end{equation*}
We set  $1-\sigma=KR^{-m'}$ for $K$ to be fixed. We have
$$\phi_R(x_0)=\frac{1}{\left(1-\frac{|x_0|^2}{R^2}\right)^\mu} R^{-\mu} C_R.$$
Noticing that
$$R^{-\mu}  R^{\frac{1}{s-1}} R^{\mu-\frac{2}{s-1}}=  R^{-\frac{1}{s-1}} \to 0
\text{ as $R\to +\infty$}$$
and
$$R^{-\mu}  (1-\sigma)^{\frac{1-m}{s-m}} R^{\mu-\frac{m}{s-m}}= K^{\frac{1-m}{s-m}},$$
we obtain
\begin{equation}\label{eqK}
\lim_{R\to+\infty}\phi_R(x_0)=b \tilde{\gamma}^{\frac{1}{s-m}} K^{\frac{1-m}{s-m}}
=\frac{\theta}{8}
\end{equation}
by fixing $K=\left(\frac{8b}{\theta}\right)^{\frac{s-m}{m-1}}\tilde{\gamma}^{\frac{1}{m-1}}.$
In this way for $R$ big enough
$$w_\sigma(x_0)>\frac{\theta}{2}>\phi_R(x_0)+\frac{\theta}{4}$$
and the difference $w_\sigma-\phi_R$ attains its maximum in $B_R$ at a point, say $x_R$,  such that 
$$(w_\sigma-\phi_R)(x_R)\geq(w_\sigma-\phi_R)(x_0)>\frac\theta4.$$
We deduce $w_\sigma(x_R)>\frac\theta4$ and 
$$\left(|u|^{s-1}u-|v_\sigma|^{s-1}v_\sigma\right)(x_R)>\delta(s) w_\sigma^s(x_R)>\delta(s)\left(\phi_R(x_R)+\frac\theta4\right)^s$$
because of (\ref{eqdelta}).\\
Using $\phi_R$ as test function for $w_\sigma$ at $x_R$
in (\ref{eqdiff}) and the inequality (\ref{eqosserman}) one has
\begin{equation}\label{eq3uniq}
\begin{split}
(1-\sigma)(f(x_R)-A)&\leq 
{\cal P}^+_{\lambda,\Lambda}(D^2\phi_R(x_R))
+ \gamma_1 |D \phi_R(x_R)|+ (1-\sigma)^{1-m}\tilde{\gamma} |D \phi_R(x_R)|^m\\
&\hskip-1cm  -\left(|u|^{s-1}u-|v_\sigma|^{s-1}v_\sigma\right)(x_R)+(\sigma-\sigma^s)|v|^{s-1}v(x_R)\\
&\hskip-1cm \le {\cal P}^+_{\lambda,\Lambda}(D^2\phi_R(x_R))+
\gamma_1 |D \phi_R(x_R)|+ (1-\sigma)^{1-m}\tilde{\gamma} |D \phi_R(x_R)|^m\\
&\hskip-1cm 
 -\delta(s)\left(\phi_R(x_R)+\frac\theta4\right)^s
+ (\sigma-\sigma^s)|v|^{s-1}v(x_R)\\
&\hskip-1cm \leq
-\delta(s) \left(\frac\theta4\right)^s+(\sigma-\sigma^s)|v|^{s-1}v(x_R).
\end{split}
\end{equation}
The function $f^-$ satisfies (\ref{eq2uniq}) with some $\rho <\frac{2m'}{\mu s} <m'$
(see (\ref{mu s >2}) and  (\ref{val-rho})). It follows
$$\limsup_{R\to+\infty}-(1-\sigma)(f(x_R)-A)
\leq K\limsup_{R\to+\infty}\frac{(f^-(x_R)+A)}{R^{m'}}=0.$$
On the other hand, using the elementary inequality $\sigma-\sigma^s\leq(s-1)(1-\sigma)$ for any $\sigma\in(0,1)$ and applying Lemma~\ref{lemuniq} with $u=v$ and $\rho <\frac{2m'}{\mu s},$ which implies
$\frac{\mu s\rho}{2}<m',$
we conclude  
$$\limsup_{R\to+\infty}(\sigma-\sigma^s)|v|^{s-1}v(x_R)\leq K(s-1)\limsup_{R\to+\infty}\frac{(v^+)^s(x_R)}{R^{m'}}=0$$ and (\ref{eq3uniq}) produces a contradiction for $R$ big enough.

We give a sketch of the proof of the case when $-H$ satisfies~(\ref{eq3-convexe})
and $f^+$ satisfies~(\ref{eq2uniq}).
Arguing as above by contradiction, assuming that two solutions $u,v$ satisfies~(\ref{contrad}),
 we now consider $w_\sigma:=u_\sigma-v:=\sigma u-v$ for $\sigma\in(0,1).$ 
Using~(\ref{eq3-H}) and the fact that $-H$ satisfies~(\ref{eq3-convexe})
one proves in a similar way as above that $w_\sigma$ is a subsolution
in $\mathbb R^n$  of 
\begin{eqnarray*}
&& {\cal P}^+_{\lambda,\Lambda}(D^2w_\sigma)
+\gamma_1 |Dw_\sigma|
+(1-\sigma)^{1-m}\,\tilde{\gamma} |Dw_\sigma|^m 
-\left(|u_\sigma|^{s-1}u_\sigma-|v|^{s-1}v\right)+(\sigma^s-\sigma)|u|^{s-1}u
\nonumber\\
&\geq &(\sigma -1)(f(x)+A),
\end{eqnarray*}
where $\tilde{\gamma}$ is still defined by~(\ref{tildgam}).
Setting as above $1-\sigma=KR^{-m'}$ with $K$ as in (\ref{eqK}),
denoting with $x_R$ a maximum point in $B_R$ of $w_\sigma-\phi_R$ and arguing as for (\ref{eq3uniq}), we obtain 
$$(\sigma -1)(f(x_R)+A)\leq-\delta\left(\frac\theta4\right)^s+(\sigma^s-\sigma)|u|^{s-1}u(x_R).$$
We obtain a contradiction as above using this time that $f^+$ satisfies (\ref{eq2uniq})
and applying the second part of Lemma~\ref{lemuniq} which gives a limiting growth for $u^+.$
\end{proof}

\begin{remark}\label{rmk1}
When $H$ satisfies
(\ref{eq3''})-(\ref{eq3'}), the subsolutions of (\ref{eq2})
are bounded from above by requiring the uniform bound of the local $L^n$-norm of $f^-$
$$\sup_{x\in\mathbb R^n}\left\|f^-\right\|_{L^n(B_1(x))}<\infty.$$
To see this it is sufficient to fix $r$ small enough in Lemma \ref{lem2} and using (\ref{eq10}). 
In this case Theorem \ref{uniqueness} holds true replacing (\ref{eq2uniq}) with the weaker assumptions
$$\limsup_{|x|\to\infty}\frac{f^-(x)}{|x|^\frac{\mu s\rho}{2}}<\infty$$ 
for $0\leq\rho< \frac{2m'}{\mu s}$.\\
Accordingly, when $-H$ satisfies~(\ref{eq3-convexe}), 
the supersolutions of (\ref{eq2}) are bounded from below 
if $\displaystyle \sup_{x\in\mathbb R^n}\left\|f^+\right\|_{L^n(B_1(x))}$ is finite
and Theorem \ref{uniqueness} continues to work under the assumption
$$\limsup_{|x|\to\infty}\frac{f^+(x)}{|x|^\frac{\mu s\rho}{2}}<\infty$$ for 
$0\leq\rho< \frac{2m'}{\mu s}$. 
\end{remark}

\appendix

\section{Appendix}

\subsection*{Proof of Lemma~\ref{lem-lineariz}}

The proof borrows arguments from \cite{bklt15, KL}, we provide it for
reader's convenience.

For $\sigma\in (0,1)$, from~(\ref{homogeneity}),
the function $v_\sigma:=\sigma v$ is a solution,
so a supersolution, of 
$$
F(x,D^2v_\sigma)
+\sigma H(x,\frac{Dv_\sigma}{\sigma})
-\sigma^{1-s}|v_\sigma|^{s-1}v_\sigma=\sigma f(x)\quad{\rm in}\;\;\mathbb R^n.
$$

We shall show that $w_\sigma=u-v_\sigma$ is a viscosity subsolution 
of the extremal PDE~(\ref{eqdiff}).
For $\phi\in C^2(\R^n)$, we suppose that $w_\sigma-\phi$ attains a local maximum 
at $\hat x\in\R^n$. 
We may suppose that $(w_\sigma-\phi)(\hat x)=0>(w_\sigma-\phi)(x)$ for $x\in B_r(\hat x)
\setminus \{ \hat x\}$ with a small $r\in (0,1)$.

Let $(x_\e, y_\e)\in B:=\overline{B}_r(\hat x)\times \overline{B}_r(\hat x)$ 
be a maximum point of $u (x)-v_\sigma (y)-(2\e)^{-1}|x-y|^2-\phi (x)$ over $B$. 
Since we may suppose $\lim_{\e\to 0}(x_\e,y_\e)=(\hat x,\hat x)$, and 
moreover 
$\lim_{\e\to 0}(u (x_\e),v_\sigma(y_\e))=(u (\hat x),v_\sigma(\hat x))$, 
it follows that $(x_\e,y_\e)\in {\rm int}(B)$ for small $\e.$ 
Hence, in view of Ishii's lemma (e.g., Theorem 3.2 in \cite{cil92}),
setting $p_\e=\e^{-1}(x_\e-y_\e)$, 
we find $X_\e ,Y_\e\in \mathcal{S}^n$ such that 
$(p_\e+D\phi (x_\e) ,X_\e+D^2\phi (x_\e))\in \overline J^{2,+}u (x_\e)$, $(p_\e,Y_\e)
\in \overline J^{2,-}v_\sigma (y_\e)$, and~(\ref{mat-ineq}) holds.
Thus, from the definition, we have
\begin{eqnarray}\label{ineg-sous}
F(x_\e, X_\e+D^2\phi (x_\e))
+H(x_\e, p_\e+D\phi (x_\e) )
-|u(x_\e)|^{s-1}u(x_\e)\geq f(x_\e)
\end{eqnarray}
and
\begin{eqnarray}\label{ineg-sur}
F(y_\e, Y_\e)
+\sigma H(y_\e , \frac{p_\e}{\sigma})
-\sigma^{1-s}|v_\sigma(y_\e)|^{s-1}v_\sigma(y_\e)\leq\sigma f(y_\e)
\end{eqnarray}

From~(\ref{struct-F}) and~(\ref{eq6}), we have
\begin{eqnarray*}
F(y_\e, Y_\e)\geq F(x_\e, X_\e)-\omega_r(|x_\e-y_\e|+\frac{|x_\e-y_\e|^2}{\e})
\end{eqnarray*}
and 
\begin{eqnarray*}
F(x_\e, X_\e+D^2\phi (x_\e))- F(x_\e, X_\e) \leq  {\cal P}^+_{\lambda,\Lambda}(D^2\phi (x_\e)).
\end{eqnarray*}

From~(\ref{eq3-H}) and~(\ref{eq3-convexe}), 
and choosing $\e$ small enough in order that $\underline{c}(1-\sigma)> \omega(|x_\e-y_\e|),$  
we get
\begin{eqnarray*}
&& H(x_\e, p_\e+D\phi (x_\e) ) - \sigma H(y_\e , \frac{p_\e}{\sigma})\\
& =& 
H(x_\e, p_\e+D\phi (x_\e) )- H(y_\e , p_\e) + H(y_\e , p_\e) - \sigma H(y_\e , \frac{p_\e}{\sigma})\\
&\leq&
\omega(|x_\e-y_\e|) (|p_\e|^m+1)+\left(\gamma_1+\gamma_m\left(|p_\e|^{m-1} +|D\phi (x_\e)|^{m-1}\right)\right)|D\phi (x_\e)|
- (1-\sigma)( \underline{c} |p_\e|^m -A)\\
&\leq&
\mathop{\rm sup}_{r\geq 0}\left\{ (- (1-\sigma)\underline{c} + \omega(|x_\e-y_\e|))r^{\frac{m}{m-1}}
+\gamma_m |D\phi (x_\e)| r \right\}\\
&& \hspace*{2cm}+\gamma_m |D\phi (x_\e)|^{m} +\gamma_1 |D\phi (x_\e)|
+ (1-\sigma)A + \omega(|x_\e-y_\e|)\\
&\leq&
\left( \gamma_m+\frac{(m-1)^{m-1}\gamma_m^m}{m^m ( (1-\sigma)\underline{c}- \omega(|x_\e-y_\e|))^{m-1}}\right)
|D\phi (x_\e)|^{m} +\gamma_1 |D\phi (x_\e)|+ (1-\sigma)A + \omega(|x_\e-y_\e|).
\end{eqnarray*}

Subtracting~(\ref{ineg-sous}) and~(\ref{ineg-sur}), letting $\e\to 0$ and using that $(2\e)^{-1}|x_\e-y_\e|^2\to 0$
it follows
\begin{eqnarray*}
&& {\cal P}^+_{\lambda,\Lambda}(D^2\phi (\hat{x}))
+ \left( \gamma_m+\frac{(m-1)^{m-1}\gamma_m^m}{m^m (1-\sigma)^{m-1}\underline{c}^{m-1}}\right)
|D\phi (\hat{x})|^{m} +\gamma_1 |D\phi (\hat{x})|\\
&& -\left( |u(\hat{x})|^{s-1}u(\hat{x}) -|v_\sigma(\hat{x})|^{s-1}v_\sigma(\hat{x})\right)
+ (\sigma - \sigma^s)|v(\hat{x})|^{s-1}v(\hat{x})
\geq
(1-\sigma) (f(\hat{x})-A)
\end{eqnarray*}
which proves that $w_\sigma$ is a viscosity subsolution of~(\ref{eqdiff})
with $\tilde{\gamma}$ given by~(\ref{tildgam}).


\end{document}